\newtheorem{theorem}{Theorem}[section]
\newtheorem*{theorem*}{Theorem}
\newtheorem*{definition*}{Definition}
\newtheorem*{prop*}{Proposition}
\newtheorem*{cor*}{Corollary}
\newtheorem*{lemma*}{Lemma}
\newtheorem*{claim*}{Claim}
\newtheorem{lemma}[theorem]{Lemma}
\newtheorem{lem}[theorem]{Lemma}
\newtheorem{cor}[theorem]{Corollary}
\newtheorem{prop}[theorem]{Proposition}
\newtheorem{thm}[theorem]{Theorem}
\newtheorem{Fact}[theorem]{Fact}
\theoremstyle{definition}
\newtheorem{definition}[theorem]{Definition}
\theoremstyle{remark}
\newtheorem{remark}[theorem]{Remark}
\newtheorem*{remarks*}{Remarks}
\numberwithin{equation}{section}
\newcommand{\Z}{\mathbb{Z}}
\newcommand{\Hplane}{\mathbb{H}^2}
\newcommand{\op}{\operatorname}
\newcommand{\T}{\op{T}}
\newcommand{\be}{\begin{equation}}
\newcommand{\ee}{\end{equation}}
\renewcommand{\T}{\mathsf{T}}
\newcommand{\N}{\mathbb N}
\renewcommand{\epsilon}{\varepsilon}
\newcommand{\R}{\mathbb{R}}
\newcommand{\PSL}{\op{PSL}}
\newcommand{\GmodGamma}{G / \Gamma}
\newcommand{\s}{\mathscr{S}_+}
\newcommand{\Imp}{\mathrm{Im}}
\newcommand{\Rep}{\mathrm{Re}}
\begin{document}
	
	\graphicspath{ {} }
	
	\title{Weaving Geodesics and New Phenomena in Horocyclic Dynamics}
 	\author{Fran\c{c}oise Dal'bo}
 	\address{Institut de recherche mathématique de Rennes (UMR CNRS 6625)\\ French-Uruguayan Laboratory IFUMI (IRL CNRS 2030)}
 	\email{francoise.dalbo@univ-rennes.fr}
 	\author{James Farre}
 	\address{Max Planck Institute for mathematics in the sciences (MPI Mis) in Leipzig}
 	\email{james.farre@mis.mpg.de}
    \author{Or Landesberg}
    \address{Department of Mathematics, Hebrew University of Jerusalem}
    \email{or.landesberg@mail.huji.ac.il}
    \author{Yair Minsky}
    \address{Department of Mathematics, Yale University}
    \email{yair.minsky@yale.edu}
	
	\begin{abstract}
		We construct geometrically infinite hyperbolic surfaces supporting horocycles with tailored recurrence properties. In particular, we obtain the first examples of non-trivial minimal horocyclic orbit closures and of infinite locally-finite conservative horocyclic invariant measures which are singular with respect to the geodesic flow.
		Other examples include surfaces supporting horocyclic orbit closures of arbitrary Hausdorff dimension in $(1,2)$. 
	\end{abstract}

    \vspace*{-0.8cm}
	\maketitle

	The study of horospherical flows on hyperbolic manifolds dates back to Hedlund in the 1930s \cite{hedlundFuchsianGroupsTransitive1936} and has played an important role in the development of modern homogeneous dynamics. In the finite-volume (and geometrically finite) cases, both the measure-theoretic and topological properties of the flow have been extensively studied, revealing a remarkable degree of rigidity (see, e.g.~\cite{furstenbergUniqueErgodicityHorocycle1973,daniUniformDistributionHorocycle1984,burgerHorocycleFlowGeometrically1990,roblinErgodiciteEquidistributionCourbure2003,ratnerRaghunathansMeasureConjecture1991}). In contrast, the behavior of horospherical flows in the general geometrically infinite setting remains much less well understood.

	Until recently the only explicitly described examples of horocyclic orbit closures on orientable hyperbolic surfaces were ``trivial'' --- either the full non-wandering set for the horocyclic flow or single closed horocycles. While the existence of other, more intricate, orbit closures was known for decades, none have been described in detail, leaving much mystery as to their potential regularity and rigidity properties (c.f.~\cite{dalboClassificationLimitPoints2000,coudeneHorocyclesRecurrentsSurfaces2010a,gayeLinexistenceDensemblesMinimaux2017,matsumotoHorocycleFlowsMinimal2016,bellisLinksHorocyclicGeodesic2018,ledrappierHorospheresAbelianCovers1997,ledrappierErratumHorospheresAbelian1998}).
	
	In recent works \cite{farreMinimizingLaminationsRegular2023a,farreClassificationHorocycleOrbit2024}, an explicit description of all horocyclic orbit closures was given in the setting of $\Z$-covers of compact hyperbolic surfaces. These orbit closures were shown to be highly irregular; their structural features depend in a delicate way on the geometry of the underlying surface.  Intriguingly, all had integer Hausdorff dimension. They are also non-minimal. 
    
    Notably, no non-trivial minimal subsets for the horocyclic flow were known before now.
	\smallskip
	
	In this paper, we provide the first examples of non-trivial minimal horocyclic subsets as well as new fractional dimensional orbit closures. 
	
	As a consequence of our constructions, we provide the first counterexamples to the horospherical infinite measure rigidity phenomenon, which has been observed in a vast variety of settings, where every horospherically invariant ergodic Radon measure is either quasi-invariant under the geodesic flow (à la Babillot–Ledrappier) or supported on a single closed orbit; see \cite{burgerHorocycleFlowGeometrically1990,roblinErgodiciteEquidistributionCourbure2003,sarigInvariantRadonMeasures2004,ledrappierInvariantMeasuresHorocycle2007,sarigHorocyclicFlowLaplacian2010,ohLocalMixingInvariant2017,sarigHorocycleFlowsSurfaces2019,landesbergRadonMeasuresInvariant2022,landesbergHorosphericallyInvariantMeasures2021,landesbergHorosphericalInvariantMeasures2023a} and \Cref{rem:Patterson example}.
	
	\subsection*{Main Results} Let $\Sigma$ be any orientable hyperbolic surface with unit tangent bundle $\T^1\Sigma \cong G/\Gamma$, where $G = \PSL_2(\R)$ and $\Gamma\le G$ is a discrete torsion-free subgroup acting isometrically on the right.
	Let $A=\{a_t=\mathrm{diag}(e^{t/2},e^{-t/2})\}_{t \in \R}$ denote the diagonal subgroup of $G$ generating, via left multiplication, the geodesic flow.  Let $A_+ = \{a_t : t\ge 0\}$, and let $N\le G$ be the lower unipotent subgroup corresponding to the stable horocyclic flow on $\T^1\Sigma$. We denote by $p$ the projection map from the unit-tangent bundle (of either $\Hplane$ or $\Sigma$) down to the surface.
	
	Given a discrete subgroup $\Gamma \leq G$, we denote by $\Lambda \subseteq \partial \Hplane$ its limit set. The non-wandering set for the horocycle flow is
	\[ \mathcal{E} = \{g\Gamma \in \GmodGamma : g^+ \in \Lambda\}, \]
	where $g^+$ is the terminal endpoint in $\partial \Hplane$ of the geodesic ray emanating from $g$.
	
	Recall that a non-empty $N$-invariant closed set $F \subseteq \Sigma$ is called $N$-minimal if all $N$-orbits in $F$ are dense in $F$. A characterization of points with dense horocyclic orbits in $\mathcal E$ is given by \cite{eberleinHorocycleFlowsCertain1977, dalboTopologieFeuilletageFortement2000} where it was shown that $\overline{Nx}\neq\mathcal E$ if and only if the geodesic ray $A_+x$ is quasi-minimizing, that is, $d_{\T^1\Sigma}(a_tx, x) \ge t-c$ for some $c\geq 0$ and every $t\geq 0$. 
	As a consequence, $\mathcal E$ is $N$-minimal if and only if $\Gamma$ is convex co-compact.

	Studying the different possible trajectories of quasi-minimizing rays and their ``efficiency'' has turned out to be key in the analysis of horocyclic orbit closures. 
	Drawing on techniques developed in \cite{farreMinimizingLaminationsRegular2023a,farreClassificationHorocycleOrbit2024} and inspired by examples introduced by Alexandre Bellis in \cite[\S 1.5.1]{bellisEtudeTopologiqueFlot2018}, we provide a recipe for tailoring geometrically infinite surfaces supporting horocycles with prescribed recurrence properties.
	Our main results are the following:
	\begin{theorem*}\;
		\begin{enumerate}[leftmargin=*]
			\item There exists a surface $\Sigma$ such that $\T^1\Sigma$ supports an $N$-minimal closed subset which is neither $\mathcal{E}$ nor a single $N$-orbit. Moreover, this minimal orbit closure supports an $N$-invariant, ergodic, infinite and locally finite measure $\mu$ which is conservative but singular with respect to the geodesic flow, that is, $a_t.\mu\perp \mu$ for all $t \neq 0$.
			\item For any $\alpha \in (1,2)$ there exists a surface $\Sigma_\alpha$ such that $\T^1\Sigma_\alpha$ supports an $\alpha$-Hausdorff dimensional horocyclic orbit closure.
		\end{enumerate}
	\end{theorem*}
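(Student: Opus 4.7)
The plan is to construct the surfaces $\Sigma$ and $\Sigma_\alpha$ by \emph{weaving} together infinitely many hyperbolic building blocks (pairs of pants, one-holed tori, or flared funnels) along closed geodesics, with the combinatorial pattern of the weaving encoded by a symbolic dynamical system over a carefully chosen alphabet. Drawing on the pants-decomposition techniques of \cite{farreMinimizingLaminationsRegular2023a,farreClassificationHorocycleOrbit2024} and the examples of Bellis \cite[\S 1.5.1]{bellisEtudeTopologiqueFlot2018}, one selects the gluing data (cuff lengths and twist parameters) so that a distinguished family of geodesic rays is quasi-minimizing, and so that the itinerary of such a ray through the building blocks realizes precisely the admissible words of a target subshift $X$, while non-admissible itineraries fail quasi-minimization and hence escape into the full non-wandering set $\mathcal E$.

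The next step is to set up a correspondence $\overline{Nx} \longleftrightarrow \overline{\mathrm{orbit}(\mathrm{itin}(x))}$ between horocyclic orbit closures and shift-orbit closures in $X$. This relies on the Dal'bo--Eberlein criterion recalled in the excerpt: since $\overline{Nx} \neq \mathcal E$ precisely when $A_+ x$ is quasi-minimizing, the orbit closure is cut out by the combinatorial constraint of belonging to $X$. The correspondence is continuous and, after passage to a transversal of the horocyclic foliation, bi-Hölder on a set of full measure, so topological and metric features of $X$ transfer directly to $\overline{Nx}$.

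For part (1), I would take $X$ to be a minimal (Toeplitz-type or odometer-like) subshift over an infinite alphabet, engineered so that the admissible infinite words form a strict, non-singleton, minimal system. Minimality of $X$ transports to $N$-minimality of a closed set which is by construction neither $\mathcal E$ nor a single $N$-orbit. The unique invariant probability measure of $X$ then lifts to a transverse measure on the horocyclic foliation, and integrating along leaves produces an ergodic, infinite, locally finite, $N$-invariant Radon measure $\mu$; conservativity is inherited from the recurrence of $X$. The main obstacle is establishing $a_t.\mu \perp \mu$ for every $t \ne 0$: the Babillot--Ledrappier dichotomy would typically force quasi-invariance, so one must exploit the fine geometry of the weaving to show that $a_t$ shifts the itinerary at a rate incommensurable with the return structure of $X$, landing $a_t.\mu$ on a disjoint class of itineraries. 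Making this rigorous requires explicit control of the length spectrum of the cuffs, and is the delicate step where the symbolic shadow alone does not suffice.

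For part (2), I would replace $X$ by a sofic or Cantor-like subshift $X_\alpha$ whose Hausdorff dimension in a natural symbolic metric can be tuned continuously through $(0,1)$ by adjusting branching rates and block lengths (a standard Moran-type recipe). Using the local product structure of $\T^1 \Sigma_\alpha$ in a neighborhood of the orbit closure (one smooth $N$-direction together with the transverse Cantor set from $X_\alpha$), one concludes $\dim_H \overline{Nx} = 1 + \dim_H(X_\alpha) = \alpha$. The essential technical point here is verifying that the map from $X_\alpha$ to the horocyclic transversal is bi-Lipschitz up to a tame hyperbolic distortion, which reduces to standard comparisons between horocyclic spacings at the boundary of $\mathbb H^2$ and the cuff-length/twist data recording the weaving.
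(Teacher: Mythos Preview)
Your proposal is a plausible-sounding sketch, but it differs substantially from the paper's approach and has a genuine gap at the point you yourself flag as delicate.

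\textbf{The construction.} The paper does not build $\Sigma$ from pants or tori glued along cuffs. It takes a closed convex domain $J_s$ in the band model of $\Hplane$, obtained by deleting a sequence of half-planes $D_{h_k}(s_k)$, and doubles it. The two distinguished geodesics $p(Ax_0)$, $p(Ax_1)$ are the images of the real axis in the two sheets; the key object is a $1$-Lipschitz ``tight map'' $\tau:\Sigma_s\to\R$ (closest-point projection to the real axis) and the associated \emph{slack} $\s(\alpha)=\mathrm{length}(\alpha)-(\tau(\alpha(b))-\tau(\alpha(a)))$. A Busemann-type function $\beta(y)=\tau(y)-\s(A_+y)$ is $N$-invariant and satisfies $\beta(a_t y)=\beta(y)+t$.

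\textbf{Part (1), and the gap in your singularity argument.} Your mechanism for $a_t.\mu\perp\mu$ --- ``$a_t$ shifts the itinerary at a rate incommensurable with the return structure of $X$'' --- is not the right idea, and you correctly sense that the symbolic shadow alone will not close this. The paper's argument is much cleaner and does not involve incommensurability at all: under the \emph{summability} condition $\sum_k \s(\eta_k^\pm)<\infty$ on the crossings, every accumulation point of $Nx_0$ is shown to be $A$-proximal to $x_0$, which gives $N$-minimality and forces $\beta\equiv 0$ on $\overline{Nx_0}$. Since $\beta(a_ty)=\beta(y)+t$, the sets $a_t\overline{Nx_0}$ are pairwise \emph{disjoint} for distinct $t$, and singularity is immediate. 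The measure itself comes from superamenability of $\R$ (or an explicit statistical-measure construction), not from lifting a subshift measure.

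\textbf{Part (2).} The paper does not code the transversal by a subshift with tunable dimension. Instead it takes a \emph{distal} loom surface ($\inf_k h_k>0$), so that finite-slack rays are eventually asymptotic to exactly one of $A_+x_0$, $A_+x_1$, giving $\overline{Nx_0}\subset A_+Nx_0\sqcup A_+Nx_1$. The transversal sets $\Delta_j=\{t\ge 0: a_tx_j\in\overline{Nx_0}\}$ are computed exactly as iterated \emph{sumsets} $\bigcup_m mE$ of the accumulation set $E$ of crossing slacks. One then invokes results of Schmeling and K\"orner producing compact $E$ with $\dim_{\mathrm H} mE$ prescribed for all $m$, and reads off $\dim_{\mathrm H}\overline{Nx_0}=1+\dim_{\mathrm H}\Delta_j$. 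Your bi-Lipschitz comparison between a symbolic Cantor set and a horocyclic transversal is neither needed nor established; the sumset description is what makes the dimension computation tractable.
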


	\begin{remarks*}
		\begin{itemize}[leftmargin=*]
			\item Our surfaces are extremely sparse; the injectivity radius along all diverging geodesic rays tends to infinity. This implies, in particular, that the tameness conditions imposed in \cite{sarigHorocyclicFlowLaplacian2010} to deduce measure rigidity cannot be removed. Equivalent geometric conditions appear in \cite{landesbergRadonMeasuresInvariant2022}.
			\item The possible non-regularity of orbit closures we construct is quite extreme, allowing us to construct orbit closures having disagreeing Hausdorff and lower/upper Minkowski dimensions; see \S\ref{subsec:varying dimensions}.
			\item Note that for any $\lambda \in (0,1]$ there exist convex co-compact Fuchsian groups having $\lambda$-dimensional limit sets. In such surfaces, the corresponding orbit closure $\mathcal{E}$, being $AN$-invariant, is hence $2+\lambda$-dimensional. We may thus conclude that any $\alpha \in [1,3]$ can be the dimension of some horocycle orbit closure.
		\end{itemize}
	\end{remarks*}

	\subsubsection*{Remark to the reader about the proof}
		While we rely on techniques developed in \cite{farreMinimizingLaminationsRegular2023a,farreClassificationHorocycleOrbit2024}, we will only make use of several elementary insights and lemmas from said papers. Our proof is fairly self-contained and requires no prior knowledge or understanding of the results in the $\Z$-cover setting.

	\section{Setup}

	\subsection{Loom Surfaces}
	
	It will be convenient for us to work with the \emph{band model} for the hyperbolic plane, that is, the space $\Hplane:=\{z \in \mathbb{C} : |\Imp z|<\pi/2 \}$ equipped with the metric $|dz|/\cos \Imp z$. 

	Given a closed convex domain $J \subset \Hplane$ with totally geodesic boundary we denote by $\hat{J}$ its double, that is, the space 
	\[ \widehat{J} = \overline{J}\times \{0,1\} / \sim \qquad \text{where} \quad (z,0)\sim (z,1) \quad \text{for all } z \in \partial J. \]
	Under these conditions, $\widehat{J}$ is a complete hyperbolic surface without boundary.
	
	For $s \in \R$ and $h \in (0,\pi/2)$ we denote by $D_h(s)$ the unique open half-plane contained in $\{\Imp z > 0 \} \cap \Hplane $ and bounded by the geodesic which is perpendicular to $s+(-\pi/2,\pi/2)i$ at the point $s+hi$, see \Cref{fig: band model}. 
	
	\begin{figure}[h]
		\centering
		\includegraphics[width=0.92\linewidth]{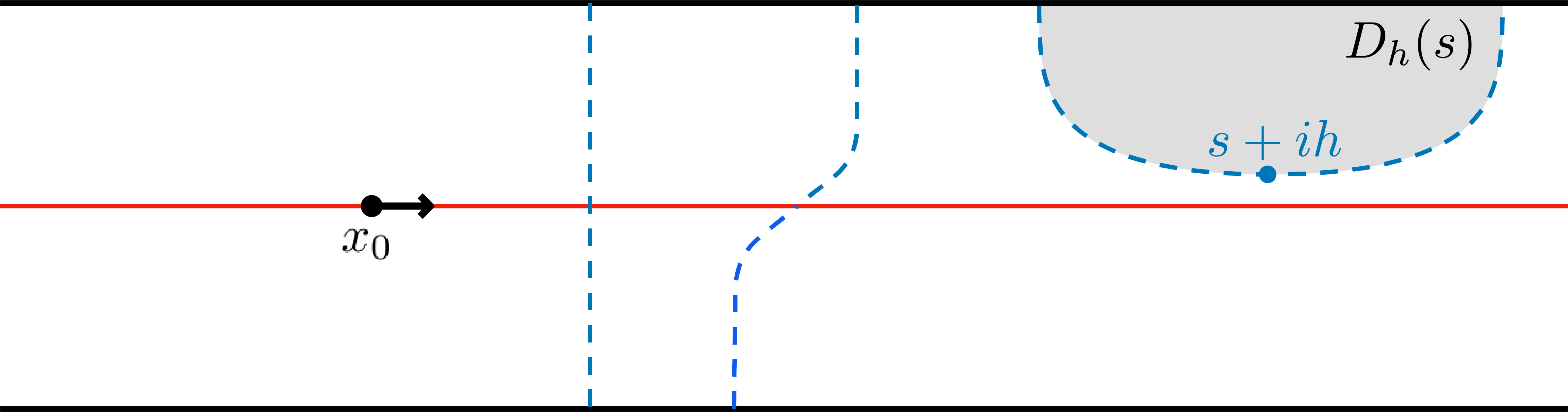}
		\caption{\small Different geodesics in the band model (in blue and red). A half plane $D_h(s)$ (shaded in gray).}
		\label{fig: band model}
	\end{figure}
	
	Given a sequence $s=(s_j,h_j)_{j \in \N}$ satisfying $\overline{D_{h_k}(s_k)} \cap \overline{D_{h_j}(s_j)} = \emptyset$ for all $k\neq j$, we consider the surface
	\[ \Sigma_s=\widehat{J_s} \quad \text{where} \quad J_s = \Hplane \smallsetminus \bigcup_k D_{h_k}(s_k). \]
    Let $q: \Sigma_s \to \overline{J_s} \subset \mathbb H^2$ be the quotient mapping identifying the two copies of $J_s$ comprising $\Sigma_s$.   
	Topologically, $\Sigma_s$ is a plane with a countable discrete set of punctures, see \Cref{fig: Birdseye view}.
	
	\begin{definition}
		A \emph{loom} surface is a surface $\Sigma_s$ as above, where the sequence $s=(s_k,h_k)_{k \in \N}$ has $s_k$ monotonic increasing, $h_k$ bounded above by $c<\pi/2$, and satisfying
		\[ d_{\Hplane}(\partial D_{h_k}(s_k), \partial D_{h_{k+1}}(s_{k+1})) \to \infty \quad \text{where} \quad k \to \infty. \]
	\end{definition}
    
	Note that for any sequence $0<h_k$ uniformly bounded away from $\frac{\pi}{2}$ there exist $s_k \to \infty$, sufficiently large and spread apart, for which $s=(s_k,h_k)_{k\in \N}$ defines a loom surface.
	
	Under the above conditions the function $\text{Inj--rad}: \Sigma_s \to (0,\infty)$, assigning the injectivity radius at a point, is a proper map. In particular, injectivity radius tends to infinity along any diverging geodesic ray.
	
	\begin{figure}[h]
		\centering
		\includegraphics[width=.9\linewidth]{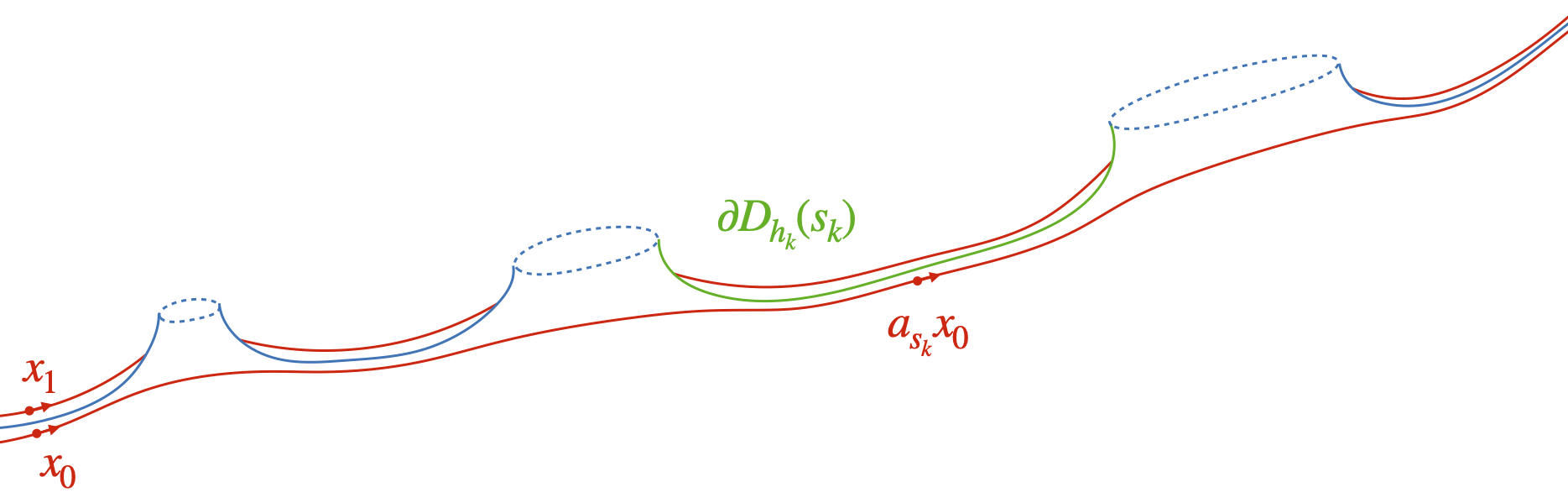}
		\caption{\small }
		\label{fig: Birdseye view}
	\end{figure}
	
	\subsection{Tight map and stretch lamination}
	
	Consider $\Sigma_s$ as above and denote by $x_j$, $j=0,1$, the points in $\T^1\Sigma_s$ corresponding to $(0,j) \in J_s\times\{j\}$ with horizontal unit vector $1 \in S^1$.
    Let $\ell\subset J_s \subset \mathbb H^2$ be the image of $p(Ax_0)$ or $p(Ax_1)$ under the quotient mapping $q$.  
    We identify $\R \cong \ell$ by the rule $t \mapsto q(p(a_tx_j))$, and define $\tau: \Sigma_s \to \mathbb R$ as the composition of $q$ followed by the nearest point projection to $\ell$.\footnote{The nearest point projection to $\ell$ in $J_s$ is the restriction of the nearest point projection to $\ell$ in $\mathbb H^2$, as $\ell\subset J_s$ and $J_s$ is geodesically convex. This map is $1$-Lipschitz and a strict contraction away from $\ell$.}

    As the composition of $1$-Lipschitz maps, $\tau$ is itself $1$-Lipschitz.  Let $\lambda = p(Ax_0) \cup p(Ax_1)$ and $\T_+^1\lambda = Ax_0 \cup Ax_1$. Observe that $\tau$ is  strictly contracting away from $\lambda$ and is isometric along each of its components.  In particular, $p(Ax_0)$ and $p(Ax_1)$ are isometrically embedded in $\Sigma$, and $Ax_0$ and $Ax_1$ are isometrically embedded in $\T^1\Sigma$.
      
     Abusing notation, we also use $\tau$ to denote $p^*\tau : \T^1\Sigma_s \to \R$, which is constant along fibers of $p$.  For $y \in \T^1\Sigma_s$,  $\tau(y) =t$ means that the closest point to $p(y)$ in $\lambda$ is $p(a_tx_0)$ or $p(a_tx_1)$.  Equivalently,  $\tau(y) = \Rep(q(p(y))$, where $\Rep(z)$ is the real part of the complex number $z$.	

	\subsection{Slack}
	
	We consider the notion of $\tau$-slack introduced in \cite[\S3]{farreClassificationHorocycleOrbit2024} which measures how ``efficiently'' a path progresses towards the end at $+\infty$:
	
	\begin{definition}\label{def: slack}
		
		Let $ \alpha: [a,b] \to \Sigma_s $ be a rectifiable curve. We define the \emph{slack of $ \alpha $} to be
		\[ \s(\alpha) = \mathrm{length}(\alpha) -
		(\tau(\alpha(b))-\tau(\alpha(a))). \]
		Similarly if $\beta:[a,b]\to\T^1\Sigma_s$ is rectifiable we define its slack to be the slack of its projection to $\Sigma_s$. 
		Note that $\s$ is non-negative and additive under concatenation of paths, so if $I\subset \R$ is connected and $\alpha: I \to \T^1\Sigma_s $, we can define \[\s(\alpha) = \lim_{T \to \infty} \s\left(\alpha|_{I\cap [-T,T]}\right)
		=\sup_{T >0} \s\left(\alpha|_{I\cap [-T,T]}\right)\in [0,\infty].\] 
	\end{definition}
	
	If $\alpha$ is a geodesic flow line, of the form $A_{[s,t]}z$, we note that $\s(\alpha)$
	is just $(t-s) - (\tau(a_t z)-\tau(a_s z))$ and that
	\begin{equation}\label{eqn: zero slack in lambda}
		\s(\alpha) = 0 \text{ if and only if } \alpha\subset \T^1_+\lambda.  
	\end{equation}
	
	\subsection{Busemann-type Function} 
	Consider the function $\beta: \T^1\Sigma_s \to [-\infty,\infty)$ defined by
	\begin{equation}\label{eq:Definition of beta}
		\beta(y)=\tau(y)-\s(A_+y) = \lim_{t \to +\infty} \tau(a_ty)-t.
	\end{equation}
	It is upper semi-continuous, as a decreasing limit of continuous functions, and also $N$-invariant, see \cite[Lemma 6.1]{farreMinimizingLaminationsRegular2023a}. Therefore, for all $y \in \T^1 \Sigma_s$ we have
	\begin{equation}\label{eq:beta horoball}
		\overline{Ny}\subseteq \beta^{-1}([\beta(y),\infty)).
	\end{equation} 
	
	In particular, since $\tau(x_j)=0$ and $\s(A_+x_j)=0$ for both $j=0,1$, we conclude:
	\begin{Fact}\label{Fact:beta horoball supset}
		For both $j=0,1$, all $y \in \overline{Nx_j}$ satisfy $\beta(y)\geq 0$.
	\end{Fact}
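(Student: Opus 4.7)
The plan is to directly evaluate $\beta$ at the two basepoints $x_j$ and then invoke the containment (\ref{eq:beta horoball}) with $y = x_j$.

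First, I would verify that $\beta(x_j) = 0$ for each $j \in \{0,1\}$. By the definition
\[ \beta(y) = \tau(y) - \s(A_+y), \]
I need both summands. For the first, the identification $\R \cong \ell$ given by $t \mapsto q(p(a_t x_j))$ sends $0$ to the basepoint $q(p(x_j))$, and $\tau$ is defined so that $\tau(y) = \Rep(q(p(y)))$, whence $\tau(x_j) = 0$. For the second, the geodesic ray $A_+ x_j$ lies in $Ax_j \subseteq \T^1_+\lambda$ by the very definition $\T^1_+\lambda = Ax_0 \cup Ax_1$, so (\ref{eqn: zero slack in lambda}) yields $\s(A_+ x_j) = 0$. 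Combining these gives $\beta(x_j) = 0$.

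Second, I would apply (\ref{eq:beta horoball}) with $y = x_j$. That inclusion states
\[ \overline{N x_j} \subseteq \beta^{-1}\bigl([\beta(x_j), \infty)\bigr) = \beta^{-1}\bigl([0,\infty)\bigr), \]
so every $y \in \overline{N x_j}$ satisfies $\beta(y) \geq 0$, which is the claim.

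There is essentially no obstacle here: the fact is a direct assembly of three ingredients already established just above, namely the explicit formula for $\beta$, the slack characterization (\ref{eqn: zero slack in lambda}) that pins down where slack vanishes, and the $N$-invariance / upper semi-continuity of $\beta$ packaged as (\ref{eq:beta horoball}). The only conceptual point worth flagging for the reader is that the finiteness $\beta(x_j) = 0$ (as opposed to $\beta(x_j) = -\infty$) is precisely what is needed to make (\ref{eq:beta horoball}) a nontrivial lower bound, and this in turn comes from the fact that $A_+ x_j$ is an isometrically embedded geodesic ray in $\lambda$, so it accumulates no slack whatsoever.
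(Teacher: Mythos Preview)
Your proposal is correct and follows exactly the paper's own argument: the paper states in one line that since $\tau(x_j)=0$ and $\s(A_+x_j)=0$ we have $\beta(x_j)=0$, and then invokes \eqref{eq:beta horoball}. You have simply unpacked these computations in slightly more detail.
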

	
	\subsection{Weaving Lemma}
	A fundamental observation is the following: any geodesic trajectory $A_+y$ spending an infinite amount of time a definite distance away from the isometric locus of $\tau$, $\T^1_+\lambda$, will necessarily have $\beta(y)=-\infty$. In other words,
	\begin{Fact}\label{Fact:asymp_to_lambda}
		All $y \in \T^1\Sigma_s$ with $\beta(y) > -\infty$ satisfy that for all $\varepsilon >0$ there exists $T>0$ such that $d(a_ty,\T^1_+\lambda) < \varepsilon$ for all $t > T$.
	\end{Fact}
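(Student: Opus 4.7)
I would prove the Fact by contrapositive: assume there exist $\varepsilon > 0$ and a sequence $t_n \to \infty$ with $d(a_{t_n}y, \T^1_+\lambda) \geq \varepsilon$ for all $n$, and deduce $\s(A_+y) = \infty$, hence $\beta(y) = \tau(y) - \s(A_+y) = -\infty$. Since $\tau$ is $1$-Lipschitz, along the geodesic $t \mapsto a_ty$ the slack $\s(A_{[s,t]}y) = (t-s) - (\tau(a_ty) - \tau(a_sy)) = \int_s^t (1-\dot\tau(a_uy))\,du$ is non-negative and additive, so it suffices to produce infinitely many disjoint intervals on each of which a definite amount of slack accumulates.

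The key step is a uniform quantitative lower bound on the instantaneous slack rate away from $\T^1_+\lambda$. Working in a local lift to the band model where $\ell$ is the real axis and $\tau$ corresponds to $\Rep$, a unit-speed hyperbolic geodesic satisfies $\dot\tau(z) = \cos(y_I)\cos(\theta_E)$, where $y_I = \Imp$-coordinate and $\theta_E$ is the Euclidean angle of the tangent vector with the positive horizontal direction. Thus $1-\dot\tau \geq 0$, with equality exactly on $\T^1_+\lambda$ (corresponding to $y_I = 0$ and $\theta_E = 0$). The quantities $1 - \dot\tau(z)$ and the $\T^1$-distance $d(z, \T^1_+\ell)$ are both independent of $\Rep(z)$ by the horizontal translation symmetry of $(\mathbb H^2,\ell)$, and both vanish simultaneously precisely on $\T^1_+\ell$. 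Hence the requirement $d(z, \T^1_+\lambda) \geq \varepsilon/2$ forces $(y_I,\theta_E)$ into a closed set disjoint from $(0,0)$, giving a uniform bound $1 - \dot\tau(z) \geq c_0 = c_0(\varepsilon) > 0$.

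The second ingredient is that this bound persists on a short interval. Because $\T^1_+\lambda$ is $A$-invariant and the geodesic flow is uniformly Lipschitz on compact time scales, there exists $\eta = \eta(\varepsilon) > 0$ such that $d(a_{t_n}y, \T^1_+\lambda) \geq \varepsilon$ implies $d(a_sy, \T^1_+\lambda) \geq \varepsilon/2$ for every $s \in [t_n, t_n + \eta]$. Combined with the previous step,
\[
\s\bigl(A_{[t_n, t_n+\eta]}y\bigr) = \int_{t_n}^{t_n+\eta}(1-\dot\tau(a_uy))\,du \geq \eta \, c_0 =: c > 0.
\]
Passing to a subsequence of $\{t_n\}$ so that the intervals $[t_n, t_n+\eta]$ are pairwise disjoint, additivity of slack yields
\[
\s(A_+y) \geq \sum_n \s\bigl(A_{[t_n,t_n+\eta]}y\bigr) \geq \sum_n c = \infty,
\]
so $\beta(y) = -\infty$, the desired contradiction.

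The main obstacle is verifying the \emph{uniform} bound $1 - \dot\tau \geq c_0(\varepsilon)$ over all of $\T^1\Sigma_s$ — uniformity is essential since the points $a_{t_n}y$ escape every compact set. This is where the symmetry of the configuration is used: $\dot\tau$ and the distance to $\T^1_+\lambda$, when pulled back to an $\mathbb H^2$-lift, are invariant under translation along $\ell$, so uniformity reduces to a single local estimate at $(y_I,\theta_E) = (0,0)$.
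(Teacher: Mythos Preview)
Your argument is correct and is essentially the natural proof of this fact. Note that the paper itself does not give a proof here: it simply refers the reader to ``the first part of the proof of \cite[Thm.~3.4]{farreMinimizingLaminationsRegular2023a}'', where the same idea (spending infinite time at definite distance from the stretch locus forces infinite slack) is carried out. Your contrapositive with the integral representation $\s(A_{[s,t]}y)=\int_s^t(1-\dot\tau)\,du$, a uniform positive lower bound for $1-\dot\tau$ on $\{d(\cdot,\T^1_+\lambda)\ge \varepsilon/2\}$, and a persistence interval around each bad time, is exactly the mechanism behind that reference.

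One point worth tightening: when you say ``in a local lift to the band model'', what you are really using is the map $q:\Sigma_s\to J_s\subset\Hplane$ from the paper (not the universal cover), since $\tau=\Rep\circ q\circ p$. Your formula $\dot\tau=\cos(y_I)\cos(\theta_E)$ is valid wherever the geodesic is not tangent to $\partial J_s$, hence almost everywhere, which suffices for the integral. The uniformity step ``$d_{\T^1\Sigma_s}(z,\T^1_+\lambda)\ge\varepsilon/2\Rightarrow (y_I,\theta_E)$ bounded away from $(0,0)$'' is not literally a consequence of translation symmetry of $(\Hplane,\ell)$, because $\Sigma_s$ itself is not translation invariant; rather, it follows from the observation that if $(y_I,\theta_E)$ is small then the vertical segment from $q(p(z))$ to $\ell$ lies in the convex set $J_s$, giving a short path in $\T^1\Sigma_s$ from $z$ to a point of $\T^1_+\lambda$ in the same sheet $J_s\times\{j\}$. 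With that clarification the argument is complete. For the persistence step you can in fact take $\eta=\varepsilon/2$, since $t\mapsto d(a_ty,\T^1_+\lambda)$ is $1$-Lipschitz (the flow has unit speed and $\T^1_+\lambda$ is $A$-invariant).
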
 
	\noindent See the first part of the proof of \cite[Thm. 3.4]{farreMinimizingLaminationsRegular2023a} for more details.
	
	In the case of loom surfaces, this asymptotic behavior ensures all finite slack geodesic rays eventually follow some \emph{weaving pattern}. 
	
	\begin{definition}\label{def:weaving geodesic ray}
		A geodesic ray $A_+w$ is said to be \emph{weaving} with \emph{pattern} $W=\{k_1 < k_2 <...\} \subseteq \N $ if $t\mapsto \tau (a_tw)$ is monotonically increasing and unbounded, for $t\geq 0$, and
		\[ W=\left\{k \in \N : p(A_+w)\cap \partial D_{h_k}(s_k)\neq \emptyset \right\}. \]
	\end{definition}
	
	\begin{lem}[Weaving Lemma]\label{lemma:weaving}
		In a loom surface $\Sigma_s$, weaving geodesics are uniquely determined by their weaving pattern and initial point.
		
		In addition, for any $ \rho \geq 0 $ there exists $S>0$ such that any geodesic ray $A_+y $ with slack $\s(A_+y) \leq \rho$ and beginning at $\tau(y)>S$ is weaving.
	\end{lem}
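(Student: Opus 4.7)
The plan is to use the slack bound to force $A_+y$ to hug $\lambda$ quantitatively, and then exploit the separation of bumps in a loom surface to organize the resulting trajectory into a clean weaving pattern. First I would establish a quantitative refinement of \Cref{Fact:asymp_to_lambda}: in the band model a unit-speed curve at height $|\Imp z| \geq c$ has $|\dot\tau| \leq \cos c$, so any geodesic segment of length $L$ lying entirely at hyperbolic distance $\geq c$ from $\T^1_+\lambda$ contributes at least $(1-\cos c)L$ to the slack. Hence the total length of time that $A_+y$ can spend at distance $\geq c$ from $\T^1_+\lambda$ is bounded by some $C(\rho,c)$ depending only on $\rho$ and $c$. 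Choosing $c$ small, $A_+y$ therefore lies in a $c$-tubular neighborhood of $\lambda$ except on a uniformly bounded set of times, and while in this good regime it is close to exactly one of the two isometric copies $p(Ax_0)$ or $p(Ax_1)$.

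Next I would use the loom condition $d_\Hplane(\partial D_{h_k}(s_k), \partial D_{h_{k+1}}(s_{k+1})) \to \infty$ to choose $S$ so large that any two consecutive bumps with $s_k \geq S$ are separated by a hyperbolic distance much greater than $C(\rho,c)$. Then between any two bumps encountered by $A_+y$ the ray has ample room to settle within distance $c$ of $\lambda$ in one of the two sheets. Near each individual bump $D_{h_k}(s_k)$, a geodesic close to $\lambda$ has, up to homotopy, only two options: pass beside the bump and continue in the same sheet, or traverse $\partial D_{h_k}(s_k)$ and switch sheets. In the latter case, the asymptotic characterization of the model crossings $\eta_k^\pm$ (backward asymptotic to $A_-x_j$ and forward asymptotic to $A_+x_{1-j}$) forces the local crossing to be homotopic to either $\eta_k^+$ or $\eta_k^-$, with sign dictated by the direction of the sheet change.

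Let $W = \{k_1 < k_2 < \cdots\}$ (possibly finite) record the indices of the bumps at which $A_+y$ actually crosses, with alternating signs $\sigma_i$. The indices increase because $\tau(a_t y)$ is quasi-monotone (up to bounded slack error) while the sequence $s_k$ is monotone; the signs alternate because each crossing switches sheets. The pulled-tight concatenation $\alpha_0 \ast \eta_{k_1}^{\sigma_1} \ast \eta_{k_2}^{\sigma_2} \ast \cdots$ is then homotopic rel endpoints (and endpoints at infinity) to $A_+y$ since both traverse precisely this sequence of bumps with the same sheet changes; by uniqueness of tight representatives in a homotopy class, $A_+y$ equals this weaving geodesic ray. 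In the case $W$ is finite, after the last crossing $A_+y$ lies within distance $c$ of $\lambda$ in a single sheet, and a geodesic in $\Hplane$ that is close to the real line and moving forward in $\tau$ is forced to converge to the east endpoint, i.e.~to $p(A_+x_0)$ or $p(A_+x_1)$, matching the appended-subray clause of the weaving definition. The main difficulty I anticipate is rigorously controlling possible ``near-miss'' behavior at each bump, where the geodesic could conceivably graze $\partial D_{h_k}(s_k)$ without cleanly committing to a crossing or a pass-by; this is handled by combining the quantitative slack control of the first step with the bump-separation estimate of the second.
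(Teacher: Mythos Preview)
Your approach is plausible and would likely work, but it is considerably more elaborate than the paper's argument, and it leaves the ``near-miss'' issue as a loose end that you yourself flag. The paper bypasses all of the quantitative asymptotic analysis and the near-miss worry by using two elementary observations.

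First, each sheet $J_s\times\{j\}$ is convex and simply connected, so a geodesic segment that enters one sheet through $\partial D_{h_k}(s_k)$ either stays in that sheet forever or leaves through a \emph{different} boundary component. This immediately rules out grazing or re-entry through the same bump, and reduces the problem to showing only that the sequence of boundary components crossed has increasing indices; the weaving structure then follows from uniqueness of geodesics in a homotopy class.

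Second, monotonicity of the crossing indices comes from a direct slack estimate: choose $k_0$ so that $\rho < d_\Hplane(\partial D_{h_k}(s_k),\partial D_{h_{k-1}}(s_{k-1}))$ for all $k\ge k_0$, and set $S=s_{k_0}$. Any path starting at $\tau(y)>S$ that reaches a boundary component of index $<k_0$, or that travels from $\partial D_{h_k}(s_k)$ to a component of smaller index for $k\ge k_0$, must contain a subsegment of length $>\rho$ with decreasing $\tau$; such a segment alone contributes more than $\rho$ to the slack. Thus backward crossings are forbidden outright.

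What the paper's route buys is that one never needs to control how close $A_+y$ is to $\T^1_+\lambda$, nor bound the time spent away from $\lambda$, nor argue about local models near bumps. What your route buys is a more dynamical picture (quantitative fellow-traveling with $\lambda$) that would generalize better to settings without such clean convexity, but here it is unnecessary machinery.
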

	
	\begin{proof}
		First note that since both $J_s$ copies are convex and simply connected, any geodesic ray passing from $J_s\times \{j\}$ through $\partial D_{h_k}(s_k)$ to $J_s\times\{j+1 \!\!\mod 2\}$ will either stay indefinitely in that side or have its first return to $J_s\times \{j\}$ happen through a different component of $\partial J_s$. 
		Whenever a weaving geodesic has an infinite weaving pattern, monotonicity of $\tau$-values ensures the indices in the pattern appear in strictly increasing order. Uniqueness of geodesic representatives in each homotopy class relative endpoints implies such a weaving geodesic is uniquely determined.
		
		On the other hand, any geodesic ray entirely contained within one $J_s\times \{j\}$ and having unbounded $\tau$-values is necessarily asymptotic to $A_+x_j$. We may thus deduce the claim for weaving geodesics with a finite pattern as well.
		\medskip
		
		For the second claim, given any $\rho \geq 0$ there exists $k_0 \in \N$ such		
		\[ \rho < \min_{k \geq k_0} d_{\Hplane}(\partial D_{h_k}(s_k), \partial D_{h_{k-1}}(s_{k-1})). \]
		Accordingly, choose $S=s_{k_0}$. Any path beginning at $\tau(y)>S$ and passing through $\partial D_{h_k}(s_k)$ for $k<k_0$ must have a segment of length$>\!\rho$ which is in the ``wrong'' direction and therefore must have slack bigger than $\rho$. 
        Similarly, for $k \ge k_0$, any path connecting $\partial D_{h_k}(s_k)$ with a boundary component of smaller index also has slack at least $\rho$, proving the claim.
	\end{proof}
	 A direct consequence of \Cref{lemma:weaving} is that any finite slack geodesic ray is eventually weaving.	
	\medskip
	
	Given $k \in \N$ we denote by $\eta_k^+$ the unique geodesic in $\Sigma_s$ which is backward asymptotic to $A_-x_0$ and forward asymptotic to $A_+x_1$ crossing once from $J_s\times \{0\}$ to $J_s\times \{1\}$ through $\partial D_{h_k}(s_k)$. We call $\eta_k^+$ a \emph{crossing}. 
	We similarly have $\eta_k^-$, the symmetric geodesic passing from $J_s\times \{1\}$ to $J_s\times \{0\}$ through $\partial D_{h_k}(s_k)$, see \Cref{fig: Crossings}.
	Symmetry of the construction implies $\eta_k^\pm \cap \partial D_{h_k}(s_k) = \{s_k+ih_k\}$.
	
	\begin{figure}[h]
		\centering
		\includegraphics[width=1\linewidth]{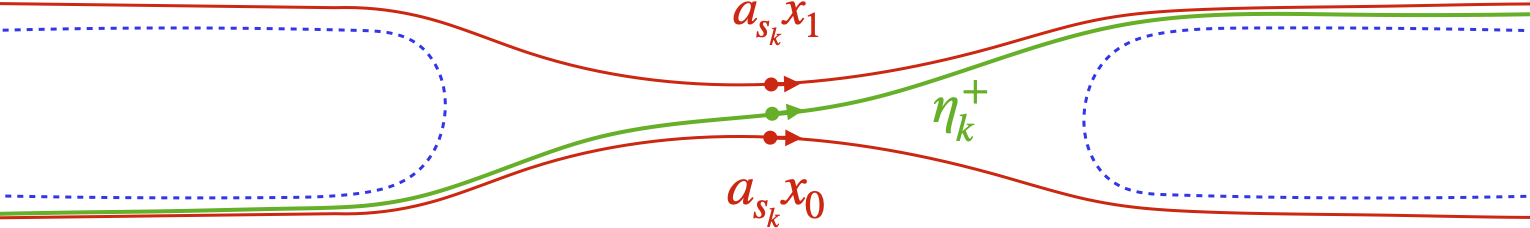}
		\vspace*{-0.5cm}
		\caption{\small }
		\label{fig: Crossings}
	\end{figure}
	
	Given a path $\alpha$, we denote by $\hat{\alpha}$ the geodesic segment given by pulling $\alpha $ tight, that is, the unique geodesic segment in $\alpha$'s homotopy class relative its endpoints (or endpoints at infinity). Given a weaving pattern $W$ we define
	\begin{equation}\label{eq:weaving prototype eta_W}
		\eta_{W,+}=\widehat{\eta_{k_1}^+\ast \eta_{k_2}^- \ast \eta_{k_3}^+ \ast...},
	\end{equation}
	where the $\ast $ notation is interpreted as concatenation of subsegments beginning and ending at the unique intersection points of the paths, see \Cref{fig: weaving geodesic}. We similarly define $\eta_{W,-}$ where the alternating signs of the crossing begin with a `$-$' sign. Whenever the weaving pattern is finite the concatenation ends with a full subray of the final crossing.
	\newpage
	
	\begin{figure}
		\centering
		\includegraphics[width=0.92\linewidth]{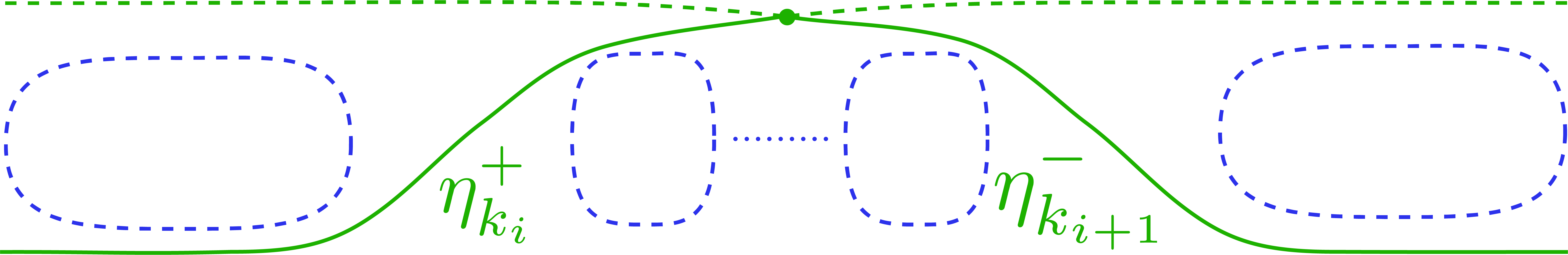}
		\caption{\small Intersection point of two crossings.}
		\label{fig: weaving geodesic}
	\end{figure}
	
	The weaving lemma allows us to draw the following realization lemma:
	\begin{lemma}\label{lem:weaving realization eta_W}
		Every weaving geodesic $A_+w$ with pattern $W$ is given by pulling tight $\alpha_0 \ast \eta_{W,\sigma}$ where $\alpha_0$ is any path connecting $p(w)$ to $\eta^{\sigma_1}_{k_1}$ fully contained within one $J_s\times\{j\}$ and $\sigma\in \{\pm\}$ is chosen appropriately.
	\end{lemma}
	
	\subsection{Slack of a Weaving Geodesic}
	
	As a first step we note that the slack of a single crossing is entirely determined by the ``height'' of the boundary component $h_k$. In fact using the hyperbolic cosine law one can verify the following:
	\begin{Fact}\label{Fact:equation slack to h_k}
		$\s(\eta_k^\pm) = 2\ln \cosh (d_{\Hplane}(0,i h_k)) = 2\ln \cosh \left(\int_0^{h_k}\frac{dt}{\cos(t)}\right) $.
	\end{Fact}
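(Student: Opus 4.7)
The plan is to reduce the slack computation to an explicit calculation in the band model $\Hplane$ via the local isometry $q$, and then apply the hyperbolic Pythagorean theorem.

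First, I would push the crossing $\eta_k^+$ down to $\overline{J_s}\subset\Hplane$ via $q$. Its image is the concatenation of two genuine $\Hplane$-geodesic rays $\gamma^-$ and $\gamma^+$ meeting at $s_k+ih_k$, where $\gamma^-$ runs from $-\infty$ on $\ell$ to $s_k+ih_k$ and $\gamma^+$ runs from $s_k+ih_k$ to $+\infty$ on $\ell$. Because $q$ is a local isometry on the interior of $J_s$ and $\tau=\Rep\circ q$, both the length and the $\tau$-displacement of $\eta_k^+$ over any compact subinterval are computed entirely in $\Hplane$. I parametrize $\eta_k^+$ by arc length with $\eta_k^+(0)=s_k+ih_k$ and set $\delta:=d_\Hplane(0,ih_k)$; by the horizontal translation invariance of $|dz|/\cos(\Imp z)$, this equals $d_\Hplane(s_k,s_k+ih_k)$, and the vertical line through $s_k$ is a geodesic perpendicular to $\ell$.

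The core calculation is hyperbolic Pythagoras applied to the right triangle with vertices $s_k$, $s_k+ih_k$, $T\in\R\cong\ell$, right-angled at $s_k$:
\[ d_\Hplane(s_k+ih_k,T)=\cosh^{-1}\!\bigl(\cosh(|T-s_k|)\cosh(\delta)\bigr)=|T-s_k|+\ln\cosh(\delta)+o(1) \]
as $|T|\to\infty$, using $\cosh^{-1}(x)=\ln(x+\sqrt{x^2-1})$. Combining this with the triangle inequality and the fact that the foot of perpendicular from $\eta_k^+(s)$ to $\ell$ tends to $\ell$ as $|s|\to\infty$, one obtains
\[ \tau(\eta_k^+(s))=s+s_k-\ln\cosh(\delta)+o(1)\quad\text{as }s\to+\infty, \]
and $\tau(\eta_k^+(s))=s+s_k+\ln\cosh(\delta)+o(1)$ as $s\to-\infty$. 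Plugging into the definition of slack,
\[ \s(\eta_k^+|_{[-T,T]})=2T-\bigl(\tau(\eta_k^+(T))-\tau(\eta_k^+(-T))\bigr)\ \longrightarrow\ 2\ln\cosh(\delta) \]
as $T\to\infty$, which is the first equality. The case of $\eta_k^-$ is identical by the sheet-swap symmetry. Finally, $\delta=\int_0^{h_k}dt/\cos(t)$ is just integration of the length form $|dz|/\cos(\Imp z)$ along the vertical geodesic segment from $0$ to $ih_k$.

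The only mildly technical point is the asymptotic expansion of $\cosh^{-1}$ and controlling the rate at which $\eta_k^+(s)$ approaches $\ell$, both of which are routine; once one invokes $q$ to push the problem into $\Hplane$, the rest is bookkeeping around one application of hyperbolic Pythagoras.
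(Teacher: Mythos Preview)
Your proposal is correct and follows exactly the route the paper indicates: the paper does not give a proof but simply says ``using the hyperbolic cosine law one can verify'' the formula, and your argument is precisely the verification via hyperbolic Pythagoras in the band model. You supply the details the paper omits; there is nothing to add.
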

	
	We invoke the following lemma from \cite[Lemma 3.5]{farreClassificationHorocycleOrbit2024}:
	
	\begin{lemma}\label{lem:epsilon chain slack}
		For all $ c>0 $ there exist constants $\kappa_c,\varepsilon_0>0$ such that the following holds for all $ 0 < \varepsilon < \varepsilon_0 $. Let $ \alpha_i:[a_i,b_i] \to \Sigma$ for $ i=1,...,n $ be a sequence of geodesic arcs, each of length greater or equal to $c$, and satisfying
		\begin{equation}\label{eqn: sum jumps}
			\sum_{i=1}^{n-1} d_{\T^1\Sigma}(\T^1\alpha_i(b_i),\T^1\alpha_{i+1}(a_{i+1})) < \varepsilon
		\end{equation}
		and let $\bar\alpha$ denote an arc obtained from $\cup\alpha_i$ by joining each endpoint $\alpha_i(b_i)$ to $\alpha_{i+1}(a_{i+1})$ using arcs whose total length is less than $\varepsilon$. Then  there exists a geodesic arc $ \alpha $  homotopic, relative to the endpoints, to $\bar \alpha$ and satisfying
		\[ \left|\s(\alpha)- \sum_{i=1}^n \s(\alpha_i) \right| < \kappa_c \cdot \varepsilon. \]
		Moreover,  the Hausdorff distance between $ \alpha $ and $ \bar \alpha $ is smaller than $ \kappa_c \varepsilon $.
		
		The claim further holds with $ \alpha_n:[a_n,\infty) \to \Sigma
		$ and where $ \alpha $ is a geodesic ray from $ \alpha_1(a_1) $ 
		which is forward-asymptotic to $\alpha_n$; and similarly with
		$\alpha_1:(-\infty,b_1]\to\Sigma$. 
	\end{lemma}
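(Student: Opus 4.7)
The plan is to split $\s(\alpha) - \sum_i \s(\alpha_i)$ into a ``$\tau$-part'' and a ``length part'' using the definition $\s = \length - \tau\text{-displacement}$, and bound each by $O_c(\varepsilon)$ separately.

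For the $\tau$-part, observe that the endpoints of $\alpha$ coincide with those of $\bar\alpha$, namely $\alpha_1(a_1)$ and $\alpha_n(b_n)$. Telescoping yields
\[\big(\tau(\alpha_n(b_n)) - \tau(\alpha_1(a_1))\big) - \sum_i \big(\tau(\alpha_i(b_i)) - \tau(\alpha_i(a_i))\big) = \sum_{i=1}^{n-1}\big(\tau(\alpha_{i+1}(a_{i+1})) - \tau(\alpha_i(b_i))\big),\]
and since $\tau$ is $1$-Lipschitz while the base-point distances between consecutive endpoints are bounded by the $\T^1\Sigma$-distances whose sum is less than $\varepsilon$, this telescoping error has absolute value at most $\varepsilon$.

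The bulk of the work is then the length comparison $|\length(\alpha) - \sum_i \length(\alpha_i)| \leq O_c(\varepsilon)$. The upper direction is immediate: $\length(\alpha) \leq \length(\bar\alpha) \leq \sum_i \length(\alpha_i) + \varepsilon$, since $\alpha$ is a length-minimizer between the endpoints of $\bar\alpha$ and the connectors contribute at most $\varepsilon$ in total. For the lower direction, I would lift $\bar\alpha$ to $\Hplane$ and view it as a broken geodesic whose $i$-th break combines a connector of length $\delta_i$ and a tangent-vector rotation by an angle $\theta_i$, both controlled by an $\varepsilon_i$ with $\sum \varepsilon_i < O(\varepsilon)$. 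The hyperbolic cosine law at such a vertex, between two segments of length at least $c$, shows that straightening a single break shortens the path by only $O(\tanh(c) \cdot \varepsilon_i^2)$. The key geometric fact, which is the heart of the argument, is that in negative curvature the effects of perturbations at distinct breaks decouple, decaying exponentially along intermediate segments of length $\ge c$; iterating this yields the bound $\length(\bar\alpha) - \length(\alpha) \leq O_c(\sum_i \varepsilon_i^2) \leq O_c(\varepsilon^2)$, which for $\varepsilon < \varepsilon_0$ is at most $O_c(\varepsilon)$.

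The Hausdorff-distance claim follows from the same Morse-type stability: near each break the tight geodesic stays within $O(\varepsilon_i)$ of the broken path, and exponential decay along intermediate segments of length $\ge c$ prevents these local perturbations from accumulating. The half-infinite variants follow by truncating, applying the finite result, and passing to the limit, using that forward-asymptotic geodesic rays synchronize their $\tau$-values at infinity up to errors vanishing in the truncation. The principal obstacle is the uniformity in $n$: a naive iteration would produce errors growing linearly with the number of joints, and it is precisely the hypothesis $\length(\alpha_i) \ge c$ that allows one to trade this linear accumulation for an absolutely convergent estimate via exponential decay of perturbations in the hyperbolic plane.
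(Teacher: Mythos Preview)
The paper does not supply its own proof of this lemma; it is quoted verbatim from \cite[Lemma~3.5]{farreClassificationHorocycleOrbit2024}. So there is nothing in the present paper to compare against, and I assess your argument on its own.

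Your decomposition into a $\tau$-part and a length-part is the right one, and the $\tau$-telescoping together with the upper bound $\length(\alpha)\le\sum_i\length(\alpha_i)+\varepsilon$ are complete and correct. The only substantive step is the reverse inequality $\sum_i\length(\alpha_i)-\length(\alpha)\le O_c(\varepsilon)$, uniformly in $n$, together with the Hausdorff bound. Here your argument is an outline rather than a proof. The single-break computation is fine: straightening a nearly-flat corner between two hyperbolic segments of length $\ge c$ costs $O(\theta^2)$ in length, and in $\Hplane$ the factor $\sinh\ell_1\sinh\ell_2/\sinh(\ell_1+\ell_2)$ is uniformly bounded, so the implied constant does not depend on the segment lengths. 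But passing from this to the global bound is not mere iteration. After you straighten one break, the tangent directions at the endpoints of the newly combined segment have moved, which perturbs the angle defects at neighbouring breaks; controlling how these perturbations propagate \emph{is} the ``exponential decoupling'' you invoke. You correctly identify this as the heart of the matter and name the right mechanism (contraction by roughly $e^{-c}$ across each intervening segment, so that errors sum geometrically rather than linearly), but you do not carry it out. The Hausdorff-distance paragraph has the same circularity: it presupposes the very Morse-type stability it is meant to establish.

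So the outline is sound and matches the standard approach to such broken-geodesic lemmas, but the step you yourself flag as the principal obstacle is asserted rather than proved. A complete argument would either run the inductive straightening with explicit bookkeeping of the propagated angle errors, or appeal to a quantitative local-to-global stability statement for piecewise geodesics in $\Hplane$ with explicit dependence of the constants on $c$.
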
	
	
	The fact that the $h_k$'s are uniformly bounded above, together with the assumption that $d_{\Hplane}(\partial D_{h_k}(s_k), \partial D_{h_{k+1}}(s_{k+1})) \to \infty$ implies $|s_k-s_l| \to \infty$ if $k\not=\ell$ and $k,\ell \to \infty$ and that corresponding crossings $\eta_k^\pm$ and $\eta_l^\mp$ intersect at angles tending to 0. 
    Moreover, taking increasing and exhausting portions of the crossing will account for all of $\s(\eta_k^\pm)$, e.g.~$\s(\eta_k^\pm|_{[-T,T]})\to \s(\eta_k^\pm)$. We thus conclude the following:
	
	\begin{prop}\label{prop:finite weaving pattern large gaps slack is sum}
		For any $\varepsilon>0$ and $m \in \N$, there exists $S>0$ such that any length $m$ weaving pattern, $W=\{k_1 < k_2 <...<k_m\}$, satisfying 
		\[ |s_{k_j}-s_{k_{j+1}}|>S \quad\text{for all}\quad 1\leq j < m \]
		will have:
		\[ \left|\s(\eta_{W,\pm})-\sum_{j=1}^{m}\s\left(\eta_{k_j}^{\pm}\right)\right|<\varepsilon. \]
	\end{prop}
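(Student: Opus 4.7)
The plan is to invoke the $\varepsilon$-chain slack estimate (Lemma \ref{lem:epsilon chain slack}) directly, using as chain the natural decomposition of the weaving geodesic into subsegments of consecutive crossings, joined at their pairwise intersection points. Two ingredients are needed: (A) the angles of intersection at those points should be small, and (B) each subsegment should retain nearly all the slack of the crossing it belongs to. Both hold uniformly in $k$ once $|s_k-s_l|$ is large, thanks to the uniform upper bound $h_k \le c < \pi/2$.

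First I would fix, given $\varepsilon$ and $m$, constants $\kappa,\varepsilon_0 > 0$ by applying Lemma \ref{lem:epsilon chain slack} with some convenient universal length $c_0>0$, and then choose $\delta>0$ with $\kappa\delta<\varepsilon/2$ and $\delta<\varepsilon_0$. Next I would exhibit $S>0$ such that for any $k \neq l$ with $|s_k-s_l|>S$: (i) the crossings $\eta_k^{\sigma}$ and $\eta_l^{-\sigma}$ intersect at a unique angle less than $\delta/(m-1)$, and (ii) the arclength parameter $|t|$ of that intersection along each crossing (with $0$ placed at the peak $s_k+ih_k$) exceeds a threshold $T$ for which $\s(\eta_k^{\sigma}|_{[-T,T]}) > \s(\eta_k^{\sigma}) - \varepsilon/(2m)$ uniformly in $k$. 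This uniformity comes from standard band-model geometry: a hyperbolic geodesic whose peak imaginary part is bounded by $c<\pi/2$ becomes nearly horizontal and approaches $\ell$ at a rate depending only on $c$; and since the intersection of two such crossings is sandwiched between peaks at horizontal distance exceeding $S$, both tangents there are nearly horizontal and the intersection itself has small imaginary part.

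Given a pattern $W = \{k_1 < \cdots < k_m\}$ with all gaps exceeding $S$, I would parameterize each $\eta_{k_j}^{\sigma_j}$ by arclength with zero at its peak, let $p_j$ be the intersection of $\eta_{k_j}^{\sigma_j}$ and $\eta_{k_{j+1}}^{\sigma_{j+1}}$, and take $\alpha_j$ to be the subsegment of $\eta_{k_j}^{\sigma_j}$ running from $p_{j-1}$ to $p_j$ (with the natural half-infinite extensions for $j=1,m$). The broken path $\bar\alpha = \alpha_1 \ast \cdots \ast \alpha_m$ is homotopic, relative to endpoints at infinity, to $\eta_{W,\pm}$ essentially by definition of the weaving geodesic; consecutive $\alpha_j$ share endpoints exactly (no connecting arcs needed), and the sum of $\T^1\Sigma$-jumps at the break points equals the sum of intersection angles, which is less than $\delta$. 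Applying Lemma \ref{lem:epsilon chain slack} in its two-sided infinite form yields
\[ \left|\s(\eta_{W,\pm}) - \sum_{j=1}^m \s(\alpha_j)\right| < \kappa\delta < \varepsilon/2, \]
while the choice of $S$ together with the non-negativity and additivity of $\s$ give $0 \le \s(\eta_{k_j}^{\sigma_j}) - \s(\alpha_j) < \varepsilon/(2m)$ for each $j$. Summing and applying the triangle inequality produces the claim.

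The main technical obstacle I anticipate is verifying the uniform geometric claim: that intersection angles tend to $0$ uniformly in the indices and that the slack-capture bound is uniform in $k$. This requires careful band-model estimates, though the paper already signals both facts (in the remark preceding the proposition and in Fact \ref{Fact:equation slack to h_k}) as consequences of the uniform upper bound on the $h_k$'s together with the hyperbolic cosine law.
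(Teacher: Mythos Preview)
Your proposal is correct and follows essentially the same approach as the paper: the paper's argument is the short paragraph immediately preceding the proposition, which notes that the uniform bound $h_k\le c<\pi/2$ together with large horizontal gaps forces the intersection angles of consecutive crossings to be small and the truncated crossings to capture nearly all of their slack, and then implicitly applies Lemma~\ref{lem:epsilon chain slack}. You have made the constants and the uniformity in $k$ explicit, which the paper leaves to the reader, but the decomposition, the key lemma, and the structure of the estimate are the same.
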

	
	A fundamental observation, highlighting the utility of the notion of slack, is the following lemma proven in \cite[Lemma 3.3]{farreClassificationHorocycleOrbit2024}:
	\begin{lemma}\label{lemma:recurrence iff ray with slack}
		For any $ x, y \in \T^1_+\lambda$ with $\tau(x) = \tau(y)$  and $ t \geq 0 $, we have $ a_t y \in \overline{Nx}$ if and only if there exists $ y_m \to y $
		such that $A_+y_m$ is asymptotic to $A_+x$, and $ \s(A_+y_m) \to t $. 
	\end{lemma}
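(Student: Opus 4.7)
My plan is to reduce the equivalence to a direct bookkeeping argument using the Busemann-type function $\beta$ together with left-invariance of the Riemannian metric on $G/\Gamma$. As a preliminary I would record the transformation rules $\beta(nz) = \beta(z)$ for $n\in N$ and $\beta(a_s z) = \beta(z) + s$, both of which follow from the contraction $a_s n a_{-s} \to e$ as $s \to \infty$ together with continuity of $\tau$. Since $x \in \T^1_+\lambda$ we have $\s(A_+x) = 0$, and hence $\beta(x) = \tau(x)$.

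For the only-if direction, suppose $a_t y \in \overline{Nx}$ and pick $n_m \in N$ with $n_m x \to a_t y$. Setting $y_m := a_{-t} n_m x$ gives $y_m \to y$. The point $y_m$ lies in the weak-stable leaf $ANx$, so $A_+y_m$ and $A_+x$ share a forward endpoint at infinity --- which I take to be the meaning of ``asymptotic'' throughout. The transformation rules give $\beta(y_m) = \beta(n_m x) - t = \beta(x) - t = \tau(x) - t$, so $\s(A_+y_m) = \tau(y_m) - \beta(y_m) = \tau(y_m) - \tau(x) + t$, which tends to $t$ by continuity of $\tau$ and the hypothesis $\tau(y) = \tau(x)$.

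Conversely, given $y_m \to y$ with $A_+y_m$ asymptotic to $A_+x$, the asymptoticity places $y_m$ in the weak-stable leaf $ANx$, so I write $y_m = n_m a_{r_m} x$ with $n_m \in N$ and $r_m \in \R$. The same $\beta$-calculation yields $\s(A_+y_m) = \tau(y_m) - \tau(x) - r_m$, and the hypothesis $\s(A_+y_m) \to t$ combined with $\tau(y_m) \to \tau(x)$ forces $r_m \to -t$. Setting $n'_m := a_t n_m a_{-t} \in N$, I obtain $a_t y_m = n'_m a_{r_m + t} x$, which converges to $a_t y$ by continuity of the $a_t$-action. Since the left action of $N$ on $G/\Gamma$ is by isometries, $d(n'_m x, n'_m a_{r_m + t} x) = d(x, a_{r_m + t} x) \to 0$ as $r_m + t \to 0$, so $n'_m x \to a_t y$, placing $a_t y \in \overline{Nx}$.

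The main pitfall I expect is fixing the interpretation of ``asymptotic''. Under the stronger reading $y_m \in Nx$, the computation would force $\s(A_+ y_m) \to 0$, precluding general $t > 0$; the weak-stable reading consistent with the paper's usage is what makes the lemma a genuine recurrence criterion. Beyond this, no semi-continuity input for $\beta$ is needed because $\beta$ is evaluated exactly (not as a limit) on the specific sequences $y_m$ considered, and everything reduces to the $AN$-decomposition of weak-stable leaves and left-invariance of the metric on $G/\Gamma$.
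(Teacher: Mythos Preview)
Your argument is correct. The paper does not include its own proof of this lemma---it cites \cite[Lemma~3.3]{farreClassificationHorocycleOrbit2024} and remarks only that ``the elementary proof \ldots\ applies verbatim''---so there is no in-text argument to compare against; your $\beta$-bookkeeping via $N$-invariance, $A$-equivariance, the $NA$-decomposition of weak-stable leaves, and left-invariance of the metric is exactly the elementary mechanism one expects, and your weak-stable reading of ``asymptotic'' (same forward endpoint, i.e.\ same $AN$-orbit) is the intended one.
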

	
	\noindent The elementary proof of this lemma applies verbatim to the setting of this paper. We will make explicit use of this lemma in \Cref{sec:distal}.
	
	\section{Minimal Orbit Closures}
	
	In this section we will work under the following assumption:
	
	\begin{definition}
		A loom surface $\Sigma_s$ with $s=(s_j,h_j)_{j \in \N}$ is said to satisfy the \emph{summability condition} if 
		\[ \sum_{j=1}^\infty \s(\eta_j^\pm)=\sum_{j=1}^\infty 2\ln \cosh (d_{\Hplane}(0,i h_j)) < \infty. \]
	\end{definition}
	
	We prove the following:
	\begin{theorem}\label{Thm:summable case}
		Let $\Sigma_s$ be a loom surface satisfying the summability condition, then the orbit closure $F=\overline{Nx_0}$ is $N$-minimal and satisfies
		\[ Nx_0 \subsetneq F \subsetneq \mathcal{E}, \]
		where $\mathcal{E}$ denotes the non-wandering set for the horocyclic flow on $\T^1\Sigma_s$.
	\end{theorem}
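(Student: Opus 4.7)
The proof splits into three claims: (a) $F \subsetneq \mathcal{E}$; (b) $Nx_0 \subsetneq F$; (c) $F$ is $N$-minimal. Claim (a) is immediate from the Eberlein--Dal'bo criterion quoted in the introduction: since $\tau$ is $1$-Lipschitz on $\Sigma_s$ and isometric along $\lambda$, $d_{\T^1\Sigma_s}(a_t x_0, x_0) \ge |\tau(a_t x_0) - \tau(x_0)| = t$ for all $t \ge 0$, so $A_+x_0$ is quasi-minimizing and hence $\overline{Nx_0} \ne \mathcal{E}$. For claim (b), I apply \Cref{lemma:recurrence iff ray with slack} with $x = x_0$, $y = x_1$, $t = 0$: for $k_m \to \infty$, pick $y_m \to x_1$ on the single-crossing geodesic $\eta_{k_m}^-$ (which is asymptotic to $A_-x_1$ in the past and $A_+ x_0$ in the future). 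Summability yields $\s(A_+y_m) \le \s(\eta_{k_m}^-) \to 0$, so $x_1 \in F$. Since $A_+x_0$ and $A_+x_1$ lie in distinct copies of $J_s$ whose gluings recede to infinity, $d_{\Sigma_s}(p(a_tx_0),p(a_tx_1)) \to \infty$, so their forward rays lie on distinct stable horocycles and $x_1 \notin Nx_0$.

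Claim (c) is the main content. My plan is first to identify $F$ explicitly, then to deduce minimality by a symmetric construction. Under summability, \Cref{prop:finite weaving pattern large gaps slack is sum} forces $\s(A_+y) \le \sigma + O(1)$ on $F$, where $\sigma := \sum_j \s(\eta_j^\pm) < \infty$, so together with \Cref{Fact:beta horoball supset} and upper semi-continuity of $\beta$ we get $\beta(y) \in [0,\infty)$ on $F$. \Cref{Fact:asymp_to_lambda} then gives $a_t y \to \T^1_+\lambda$, which via stability of the geodesic flow forces $y \in N \cdot \T^1_+\lambda$; write $y = n_0 a_r x_j$ with $n_0 \in N$, $r \ge 0$, and $j \in \{0,1\}$. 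Since $F$ is $N$-invariant, $a_r x_j \in F$, and \Cref{lemma:recurrence iff ray with slack} yields $y_m \to x_j$ with $A_+ y_m$ asymptotic to $A_+x_0$ and $\s(A_+y_m) \to r$. The convergence $y_m \to x_j$ in $\T^1\Sigma_s$ forces $y_m$'s direction to be nearly horizontal, so any weaving detour in $A_+y_m$ can only occur at $\partial D_{h_k}$ for $k \to \infty$; summability then forces these slack contributions to vanish, giving $r = 0$. Hence $F \cap \T^1_+\lambda = \{x_0, x_1\}$ and $F = Nx_0 \cup Nx_1$.

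Minimality now reduces to $\overline{Nx_1} = F$, equivalently $x_0 \in \overline{Nx_1}$; this is the symmetric version of (b): apply the lemma with $x = x_1$, $y = x_0$, $t = 0$ using $y_m' \to x_0$ on the single-crossing geodesic $\eta_{k_m}^+$. The main obstacle is the slack-vanishing argument in the structural step that forces $r = 0$, namely ruling out approximating sequences $y_m \to x_j$ whose forward rays detour with nontrivial slack. The required technical input is that convergence in $\T^1\Sigma_s$ constrains the direction of $y_m$, not just its base point; this pushes the first weaving crossing of $A_+y_m$ with some $\partial D_{h_{k_m}}$ to indices $k_m \to \infty$, whose slack contribution vanishes by summability. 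The Weaving Lemma together with \Cref{prop:finite weaving pattern large gaps slack is sum} makes this precise, mirroring, in a simpler setting, the analogous slack-asymptotic analyses in \cite{farreClassificationHorocycleOrbit2024}.
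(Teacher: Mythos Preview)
Parts (a) and (b) are essentially fine, though your justification for $x_1\notin Nx_0$ contains an error: summability forces $h_k\to 0$, so in fact $d(a_{s_k}x_0,a_{s_k}x_1)\to 0$ and the distance does \emph{not} tend to infinity---indeed $x_0$ and $x_1$ are $A$-proximal, which is how the paper obtains $x_1\in F$ via \Cref{prop:proximal points same OC}. The conclusion $x_1\notin Nx_0$ survives because $\limsup_t d(a_tx_0,a_tx_1)=\infty$ (take $t$ midway between consecutive $s_k$'s), so the rays are not asymptotic.

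The real gap is in (c): the step ``$a_ty\to \T^1_+\lambda$ via stability forces $y\in N\cdot\T^1_+\lambda$'' is false, and so is the resulting identification $F=Nx_0\cup Nx_1$. Take any \emph{infinite} weaving pattern $W$; summability gives $\s(\eta_{W,+})<\infty$ (cf.\ \Cref{lemma:2nd observation - tail weaving pattern}), so there is a forward unit tangent $y$ along $\eta_{W,+}$ with $\beta(y)=0$, which is $A$-proximal to $x_0$ by \Cref{lemma:1st observation beta determines proximality} and hence lies in $F$ by \Cref{prop:proximal points same OC}. But $A_+y$ crosses between the two sheets infinitely often and is asymptotic to neither $A_+x_0$ nor $A_+x_1$, so $y\notin NAx_0\cup NAx_1$. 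Being eventually $\varepsilon$-close to $\T^1_+\lambda$ is strictly weaker than lying on a stable leaf of one of its components, precisely because summability makes the two components come arbitrarily close. The paper avoids any decomposition of $F$: it shows directly that $\beta(y)=0$ for every $y\in F$ (approximate $y$ by $nx_0$, push forward until the Weaving Lemma forces the tail pattern to have indices $\ge k_0$, then invoke \Cref{lemma:2nd observation - tail weaving pattern} to bound the remaining slack), and concludes minimality from \Cref{lemma:1st observation beta determines proximality} and \Cref{prop:proximal points same OC}. Your slack-vanishing idea is the right ingredient; it should be used to prove $\beta\equiv 0$ on $F$, not to support a false structural claim about $F$.
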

	\medskip
	
	Recall that two points $y,w \in \T^1\Sigma_s$ are called $A$-\emph{proximal} if  
	\[ \liminf_{t\to +\infty} d_{\T^1\Sigma_s}(a_ty,a_tw)=0. \]
	We will make use of the following fact, proven in \cite[Cor.~8.3]{farreMinimizingLaminationsRegular2023a}:
	\begin{prop}\label{prop:proximal points same OC}
		If $y,w \in \T^1 \Sigma_s$ are $A$-proximal then $\overline{Ny}=\overline{Nw}$.
	\end{prop}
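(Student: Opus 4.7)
The strategy is to establish $y \in \overline{Nw}$; since $A$-proximality is symmetric in $y$ and $w$, the same argument will give $w \in \overline{Ny}$. Because orbit closures are $N$-invariant and closed, this forces $\overline{Ny} \subseteq \overline{Nw}$ and $\overline{Nw} \subseteq \overline{Ny}$, yielding the desired equality. So the entire content is showing that $A$-proximal pairs satisfy $y \in \overline{Nw}$.

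The tool is the local Bruhat-type decomposition $G = N A N^+$ near the identity, where $N^+$ is the unipotent subgroup opposite to $N$. Its conjugation behavior under $A$ is the crucial point: for $u \in N$ and $v \in N^+$ close to $e$, the conjugate $a_{-t} u a_{t}$ lies in $N$ with parameter expanded by $e^{t}$, while $a_{-t} v a_{t}$ lies in $N^+$ with parameter contracted by $e^{-t}$, and $A$ is centralized by itself. In particular, applied to a small element $g = uav$ decomposed in $NAN^+$, the conjugate $a_{-t} g\, a_{t}$ still belongs to $NAN^+$, but its $A$ and $N^+$ factors shrink while its $N$ factor may grow arbitrarily large.

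Now by hypothesis there exist $t_n \to \infty$ with $d(a_{t_n}y, a_{t_n}w) \to 0$ in $G/\Gamma$. Left-transitivity of $G$ on $G/\Gamma$ produces $\eta_n \in G$ with $\eta_n \to e$ and $\eta_n \cdot (a_{t_n}w) = a_{t_n}y$, which rearranges to $w = (a_{-t_n}\eta_n^{-1} a_{t_n}) \cdot y$. Decompose $\eta_n^{-1} = u_n a_{s_n} v_n$ with $u_n \in N$, $a_{s_n} \in A$, $v_n \in N^+$, all of size $O(d_G(\eta_n, e))$ and hence tending to $e$. Conjugating,
\[
a_{-t_n} \eta_n^{-1} a_{t_n} = U_n \, a_{s_n} \, V_n,
\]
where $U_n \in N$ is $u_n$ scaled by $e^{t_n}$ (possibly unbounded), and $V_n \in N^+$ is $v_n$ scaled by $e^{-t_n}$ (so $V_n \to e$), while $a_{s_n} \to e$. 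Substituting, $w = U_n \cdot (a_{s_n} V_n y)$, whence
\[
U_n^{-1} \cdot w = a_{s_n} V_n \cdot y \longrightarrow y
\]
with $U_n^{-1} \in N$, proving $y \in \overline{Nw}$.

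The one technical point to verify is that the $NAN^+$ factorization near the identity is a local diffeomorphism with uniform control, so that each factor $u_n, a_{s_n}, v_n$ has size comparable to $d_G(\eta_n, e)$. This is standard from the nondegeneracy of the multiplication map $N \times A \times N^+ \to G$ at $e$. Beyond this, the argument uses no global information about $\Gamma$ and applies to any discrete subgroup of $G = \PSL_2(\mathbb{R})$; this is the sense in which the classical statement that $A$-proximal points share strong stable orbit closures extends unchanged to the geometrically infinite setting considered here.
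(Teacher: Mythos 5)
Your proof is correct and is the standard local-product-structure argument; the paper does not prove \Cref{prop:proximal points same OC} itself but cites it from \cite[Cor.~8.3]{farreMinimizingLaminationsRegular2023a}, where the same $NAN^+$ argument is used. The one delicate point, which you handle correctly, is the ordering of the Bruhat factors: conjugating by $a_{-t_n}$ (with $t_n\to+\infty$) expands the $N$-factor while contracting the $A$- and $N^+$-factors, so writing $\eta_n^{-1}=u_na_{s_n}v_n\in NAN^+$ (rather than $N^+AN$) isolates a possibly large $U_n^{-1}\in N$ with $U_n^{-1}w=a_{s_n}V_ny\to y$; the opposite ordering would leave the expanding factor trapped on the right and the argument would not close. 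As you observe, the argument is purely local in $G$ and uses only discreteness of $\Gamma$, which is precisely why this standard fact carries over unchanged to the geometrically infinite setting.
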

	
	Our strategy for proving \Cref{Thm:summable case} will be to show that all the points in $\overline{Nx_0}$ are $A$-proximal to $x_0$. 
	\medskip
	
	Recall our notation for the projection $p:T^1\Sigma_s \to \Sigma_s$. The half planes 
	\[ \mathcal{O}=\left\{z \in \Hplane : \mathrm{Im}z<0 \right\}\times\left\{0,1\right\}, \]
	whose boundaries are $p(A_+x_0)$ and $p(A_+x_1)$ respectively, are isometrically embedded in $\Sigma_s$. Therefore if a geodesic ray $\ell_+ = p(A_+w)$ in $\Sigma_s$ enters $\mathcal{O}$ it can never leave it. We observe the following:
	\begin{lemma}[Bottleneck Lemma]\label{lemma:1st observation bottlenecks}
		There exists a sequence $\delta_k \to 0$ for which any $w\in \T^1\Sigma_s$ with $\sup \tau\left(A_+w\right)=+\infty$ and $p(A_+w) \subseteq \Sigma_s\smallsetminus \mathcal{O}$ satisfies
		\[ d_{\T^1\Sigma_s}\left(a_{t_k}w,a_{s_k}x_0\right) \leq \delta_k \quad \text{for all large }k, \]
		where $t_k \to \infty$ is the sequence of times in which $\tau(a_{t_k}w)=\tau(a_{s_k}x_0)=s_k$.
		
		More generally, if $A_+w$ is any geodesic ray with unbounded positive $\tau$-value then
		\begin{equation}\label{eq:non-uniform asymptoticity in bottlenecks}
			d_{\T^1\Sigma_s}\left(a_{t_k}w,a_{s_k}x_0\right) \to 0,
		\end{equation}
		for similarly chosen $t_k$'s.
	\end{lemma}
	
	\noindent This lemma describes a sequence of arbitrarily thin ``bottlenecks'', depicted in \Cref{fig: Crossings}, which every geodesic ray with unbounded positive $\tau$-value in $\Sigma_s\smallsetminus \mathcal{O}$ is forced to pass through. 
	
	\begin{proof}
		First, note that the summability condition implies in particular that $h_k \to 0$ and hence
		\begin{equation}\label{eq:proximality of x_0 and x_1}
			\lim_{k \to \infty} d_{\T^1\Sigma_s}\left(a_{s_k}x_0,a_{s_k}x_1\right)=0.
		\end{equation}		
		Let $I_k=[a_{s_k}x_,a_{s_k}x_1]$ be the ``vertical'' geodesic segment connecting the $a_{s_k}x_i$'s. Note that $I_k=\tau^{-1}(s_k)\cap \Sigma_s\smallsetminus \mathcal{O}$.
		
		Let $w\in \T^1\Sigma_s$ with $\ell_+=p(A_+w)\subseteq \Sigma_s\smallsetminus \mathcal{O}$ be as in the statement of the lemma. For all large $k$, the balls $B_k=B_3(a_{s_k}x_0)$ of radius 3 around $a_{s_k}x_0$ are isometrically embedded in $\Sigma_s$ and contain $\T^1I_k=p^{-1}(I_k)$. The geodesic segments $A_{[-1,1]}a_{s_k}x_i$ are contained in $B_k$ for all large $k$ and their Hausdorff distance tends to 0 in $\T^1\Sigma_s$. By our assumption, $\ell_+\cap B_k \subseteq B_k \smallsetminus \mathcal{O}$ implying that $\ell_+\cap I_k \neq \emptyset$, see \Cref{fig: Crossings}. One can readily verify that
		\[ \delta_k:=\textrm{diam}\left\{y \in \T^1I_k : p\left(A_{[-1,1]}y\right) \cap p\left(A_{[-1,1]}a_{s_k}x_0 \cup A_{[-1,1]}a_{s_k}x_1\right)=\emptyset\right\} \]
		tends to $0$ as $k\to \infty$. Since $p\left(A_+w\right) \cap p\left(Aa_{s_k}x_0 \cup Aa_{s_k}x_1\right)=\emptyset$ we have
		\begin{equation}\label{eq:subseq in bottlenecks}
			d_{\T^1\Sigma_s}\left(a_{t_k}w,a_{s_k}x_0\right) \leq \delta_k \qquad \text{for all large }k,
		\end{equation}
		where $a_{t_k}w \in \T^1I_k$, as claimed.
		
		More generally, notice that any geodesic ray $A_+w$ with unbounded $\tau$-value is either eventually contained in $\Sigma_s\smallsetminus \mathcal{O}$ or is asymptotic to one of the $A_+x_i$'s.	In both cases we conclude \eqref{eq:non-uniform asymptoticity in bottlenecks}.
	\end{proof}
	
	As a direct corollary we deduce:	
	\begin{cor}\label{Cor:From 1st observation beta determines proximality}
		If $\beta(w)=b>-\infty$ then $w$ is $A$-proximal to $a_b x_0$.
	\end{cor}
	
	\begin{proof}
		Assume $\beta(w)=b>-\infty$, then in particular $\tau(a_tw)\to +\infty$ as $t \to +\infty$ and hence \eqref{eq:non-uniform asymptoticity in bottlenecks} holds for $\tau(a_{t_k}w)=\tau(a_{s_k}x_0)$. By the definition of $\beta(w)$, in \eqref{eq:Definition of beta}, we have 
		\[ \lim_{k \to \infty}s_k-t_k=\lim_{k \to \infty}\tau(a_{t_k}w)-t_k=\beta(w)=b,  \]
		that is, $d_{\T^1\Sigma_s}\left(a_{t_k}w,a_{t_k+b}x_0\right) \to 0$, as claimed.
	\end{proof}
	
	A second useful observation is the following:
	\begin{lemma}\label{lemma:2nd observation - tail weaving pattern}
		For any $\varepsilon > 0$ there exists $k_0 \in \N$ for which all weaving patterns $W$ with $W\geq k_0$ satisfy $\s(\eta_{W,\pm})<\varepsilon$.
	\end{lemma}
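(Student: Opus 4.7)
The plan is to bound $\s(\eta_{W,\pm})$ above by $\sum_{k \in W} \s(\eta_k^\pm)$, which by the summability condition can be made less than $\varepsilon$ for $W \subseteq [k_0,\infty)$ with $k_0$ large. Accordingly, given $\varepsilon > 0$, I would first use summability to choose $k_0 \in \N$ with $\sum_{k \geq k_0} \s(\eta_k^\pm) < \varepsilon$. For any weaving pattern $W = \{k_1 < k_2 < \cdots\}$ with $W \geq k_0$ (possibly infinite), recall that $\eta_{W,\pm}$ is by definition the pull-tight of the concatenation $\alpha_W := \eta_{k_1}^{\sigma_1} \ast \eta_{k_2}^{\sigma_2} \ast \cdots$ at successive intersection points.

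Writing $\alpha_W = \beta_1 \ast \beta_2 \ast \cdots$ where $\beta_j \subseteq \eta_{k_j}^{\sigma_j}$ is the subsegment bounded by its intersections with the neighboring crossings, additivity of slack together with the fact that subsegments have slack at most that of the full curve gives
\[ \s(\alpha_W) = \sum_j \s(\beta_j) \leq \sum_j \s(\eta_{k_j}^\pm) \leq \sum_{k \geq k_0} \s(\eta_k^\pm) < \varepsilon. \]
It then remains to show $\s(\eta_{W,\pm}) \leq \s(\alpha_W)$. I would argue this via finite truncations: for each $T > 0$, $\eta_{W,\pm}|_{[-T,T]}$ is length-minimizing in its homotopy class rel its endpoints, and one can form a competitor $\gamma_T$ by joining $\eta_{W,\pm}(-T)$ through a short connector to a point on $\alpha_W$, then following $\alpha_W$, then a short connector back to $\eta_{W,\pm}(T)$. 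Since $\gamma_T$ has the same endpoints and hence the same $\tau$-progress as $\eta_{W,\pm}|_{[-T,T]}$, one obtains $\s(\eta_{W,\pm}|_{[-T,T]}) \leq \s(\gamma_T) \leq \s(\alpha_W) + \ell_1(T) + \ell_2(T)$, where $\ell_i(T)$ denote the connector lengths. As both $\eta_{W,\pm}$ and $\alpha_W$ are asymptotic to the same rays at $\pm\infty$ (by construction of $\alpha_W$), the connectors can be chosen with $\ell_i(T) \to 0$, so passing to $T \to \infty$ yields $\s(\eta_{W,\pm}) \leq \s(\alpha_W) < \varepsilon$.

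The main obstacle is the rigorous justification that the connector lengths vanish in the limit $T \to \infty$, which depends on the asymptotic matching of $\eta_{W,\pm}$ and $\alpha_W$ at both ends. A conceptually cleaner alternative would be to apply \Cref{lem:epsilon chain slack} directly to the (bi-infinite) chain $\{\beta_j\}$, whose only jumps are the angular differences $\theta_j$ at the shared intersection points; this reduces the task to showing that the total angular jump $\sum_j \theta_j$ is uniformly small for $W \subseteq [k_0,\infty)$, which would follow from geometric estimates on the decay of intersection angles of far-apart crossings implied by the loom conditions $h_k \to 0$ and $d_{\Hplane}(\partial D_{h_k}(s_k),\partial D_{h_{k+1}}(s_{k+1})) \to \infty$.
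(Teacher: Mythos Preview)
Your approach is essentially the paper's: choose $k_0$ by summability, bound $\s(\alpha_W)\le\sum_{k\ge k_0}\s(\eta_k^\pm)<\varepsilon$, and then show $\s(\eta_{W,\pm})\le\s(\alpha_W)$ by comparing the geodesic with the concatenated path on exhausting subsegments. The difference is only in how the comparison step is justified. Your phrasing ``asymptotic to the same rays at $\pm\infty$'' is fine at $-\infty$ but is not correct at $+\infty$ for infinite $W$, where neither curve is asymptotic to a single ray. The paper sidesteps this by observing that both $\eta_{W,\pm}$ and $\alpha_W$ have arbitrarily long segments arbitrarily close to $\T^1_+\lambda$, namely near the intersection points of consecutive crossings; choosing the truncation endpoints there gives subsegments whose $\tau$-endpoints (and in fact positions) nearly match, so the connector lengths vanish along a suitable sequence $T_n\to\infty$. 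This is exactly the missing ingredient you flagged.

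Your proposed alternative via \Cref{lem:epsilon chain slack} does not work as stated: that lemma requires the \emph{total} jump $\sum_j\theta_j<\varepsilon_0$, and while each individual angle can be made small by taking $k_0$ large, there is no reason the sum over an unbounded or infinite pattern stays below $\varepsilon_0$. Stick with the first argument, using proximity to $\T^1_+\lambda$ at crossing intersections to produce the connectors.
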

	
	\begin{proof}
		By the summability condition, for any $\varepsilon>0$ there exists $k_0$ large enough for which 
		\[ \sum_{j=k_0}^\infty \s(\eta_j^\pm) < \varepsilon. \]
		Let $W=\{k_1,k_2,...\}$ be any weaving pattern with $W\geq k_0$. By the additivity and non-negativity of slack we conclude that the ``untightened'' broken path
		\[ \alpha_W=\eta_{k_1}^\pm \ast \eta_{k_2}^\mp \ast ..., \]
		given by concatenating the appropriate subsegments of each crossing has
		\[ \s(\alpha_W)\leq \sum_{j=k_0}^\infty \s(\eta_j^\pm)<\varepsilon. \]
		
		The weaving geodesic $\eta_{W,\pm}=\widehat{\alpha_W}$ is given by pulling $\alpha_W$ tight. By construction, both $\alpha_W$ and  $\eta_{W,\pm}$ are contained in $\Sigma_s\smallsetminus \mathcal{O}$ and have unbounded $\tau$-values, implying by \Cref{lemma:1st observation bottlenecks} the existence of $p^+_k \in \eta_{W,\pm}$ and $q^+_k \in \alpha_W$ satisfying
		\[ \tau(p^+_k)=\tau(q^+_k)=s_k \to +\infty \qquad \text{and}\qquad d_{\Sigma_s}(p^+_k,q^+_k) \to 0. \]
		Moreover, both paths are backwards asymptotic to one of the $A_-x_i$'s giving sequences $p^-_k$ and $q^-_k$ with equal $\tau$-values tending to $-\infty$ and having $d_{\Sigma_s}(p^-_k,q^-_k) \to 0$.
		
		Denote $\eta_{W,\pm}^k$ the subsegment of $\eta_{W,\pm}$ connecting $p^-_k$ to $p^+_k$, and by $\alpha_W^k$ the subsegment of $\alpha_W$ connecting $q^-_k$ to $q^+_k$. By construction, the length of $\eta_{W,\pm}^k$ is shorter than the length of $[p^-_k,q^-_k]\ast \alpha_W^k \ast [p^+_k,q^+_k]$ implying
		\begin{align*}
			\s(\eta_{W,\pm})&=\lim_{k \to \infty} \s(\eta_{W,\pm}^k)\leq\\
			&\leq \lim_{k \to \infty} \left[\s(\alpha_{W}^k)+d_{\Sigma_s}(p^-_k,q^-_k)+d_{\Sigma_s}(p^+_k,q^+_k)\right]=\\
			&=\s(\alpha_{W})+0+0<\varepsilon.
		\end{align*} 
	\end{proof}
	
	We are now set to prove the theorem:
	\begin{proof}[Proof of \Cref{Thm:summable case}]
		First, by $\eqref{eq:proximality of x_0 and x_1}$ we know that $x_0$ and $x_1$ are $A$-proximal, therefore implying $x_1 \in F=\overline{Nx_0}$ by \Cref{prop:proximal points same OC}. Since $x_1 \notin Nx_0$, as the corresponding $A_+x_j$'s are not asymptotic, we deduce that $F\neq Nx_0$. Recall that horocycles outside of $\mathcal{E}$, those who are based outside the limit set, are properly embedded in the quotient manifold, hence $Nx_0\subseteq \mathcal{E}$. Strict inclusion $F \subsetneq \mathcal{E}$ follows, for instance, from $x_0$ being quasi-minimizing, see \cite{eberleinHorocycleFlowsCertain1977,dalboTopologieFeuilletageFortement2000}.
		\medskip
		
		The main part of the claim is the $N$-minimality. As promised, our strategy will be to show that all accumulation points of $Nx_0$ are $A$-proximal to $x_0$ and therefore, by \Cref{prop:proximal points same OC}, have dense $N$-orbits in $F$. By \Cref{Cor:From 1st observation beta determines proximality}, it suffices to prove that $\beta(y)=0$ for all $y \in F$, which by definition amounts to showing:
		\begin{equation*}
			\s(A_+y)=\tau(y).
		\end{equation*}
		
		Let $y \in F$ and let $0<\varepsilon<1$ be smaller than half the minimal injectivity radius in $\Sigma_s$. By \Cref{lemma:weaving} and \Cref{lemma:2nd observation - tail weaving pattern} there exists $k_0$ large enough so that any geodesic ray $A_+w$ beginning at $\tau(w)\geq s_{k_0}$ with $\s(A_+w)\leq \rho=\tau(y)+1$ is weaving with pattern $W\geq k_0$ for which
		\begin{equation}\label{eq:proof of summable case - small slack tail}
			\s(\eta_{W,\pm})<\varepsilon.
		\end{equation}
		By increasing $k_0$ if necessary and using \Cref{lemma:1st observation bottlenecks} we may ensure $\delta_k<\varepsilon$ for all $k\geq k_0$. 
		
		Now fix $T\geq 0$ for which $d_{\T^1\Sigma_s}(a_Ty,a_{s_k}x_0)<\varepsilon$ with $\tau(a_Ty)=s_k>s_{k_0}+1$ for some $k> k_0$ as ensured by \Cref{lemma:1st observation bottlenecks}. By increasing $T$ if necessary we may further require that $\s(A_{[T,\infty)}y)<\varepsilon$, as this expression decreases to 0 in $T$.
		
		Let $nx_0 \in Nx_0$ be so close to $y$ so as to satisfy
		\begin{equation}\label{eq:summable case proof - shadow initial part}
			d_{\T^1\Sigma_s}(nx_0,y)<\varepsilon \quad\text{and}\quad d_{\T^1\Sigma_s}(a_Tnx_0,a_Ty)<\varepsilon.
		\end{equation}
		Denote $w=a_Tnx_0$.	The function $\beta$ being $N$-invariant implies $\beta(nx_0)=\beta(x_0)=0$ and hence 
		\begin{equation}\label{eq:summable case proof - slack=tau nx_0}
			\s(A_+nx_0)=\tau(nx_0).
		\end{equation}
		We thus have
		\begin{align*}
			\s(A_+w)&\leq \s(A_+nx_0)=\tau(nx_0)\leq \\
			&\leq \tau(y)+d_{\T^1\Sigma_s}(nx_0,y)\leq \tau(y)+1=\rho,
		\end{align*}
		by the 1-Lipschitz property of $\tau$. Since $\tau(w)>s_k-1>s_{k_0}$ we are ensured by \Cref{lemma:2nd observation - tail weaving pattern} that $A_+w$ is weaving of pattern $W \geq k_0$  satisfying \eqref{eq:proof of summable case - small slack tail}. Let $\eta_{W,\sigma}$ correspond to $A_+w$ as in \Cref{lem:weaving realization eta_W}.
		By \Cref{lemma:1st observation bottlenecks} and our choice of $a_Ty$ we know that
		\[ d_{\Sigma_s}(p(w),\eta_{W,\sigma})\leq d_{\Sigma_s}(p(w),p(a_Ty))+d_{\Sigma_s}(p(a_Ty),\eta_{W,\sigma})<\varepsilon+\delta_k<2\varepsilon.  \]
		Let $\alpha$ denote the shortest path connecting $p(w)$ to $\eta_{W,\sigma}$, then $\mathrm{length}(\alpha)<2\varepsilon$ is smaller than the injectivity radius around $p(w)$ implying that $A_+w$ is given by pulling tight $\alpha \ast \eta_{W,\sigma}$. We thus conclude
		\begin{align*}
			\s(A_+w)&\leq 2\mathrm{length}(\alpha)+\s(\eta_{W,\sigma})<4\varepsilon+\varepsilon<5\varepsilon,
		\end{align*}
		where we made use of the fact that the slack of a path is always bounded above by twice its length, since $\tau$ is $1$-Lipschitz.		
		
		On the other hand, by \eqref{eq:summable case proof - shadow initial part} we have
		\[ |\s(A_{[0,T]}y)-\s(A_{[0,T]}nx_0)|\leq |\tau(y)-\tau(nx_0)|+|\tau(a_Ty)-\tau(a_Tnx_0)|<2\varepsilon. \]
		We therefore obtain
		\begin{align*}
			 |\s(A_+y)-\s(A_+nx_0)|&\leq 2\varepsilon+\s(A_{[T,\infty)}y)+\s(A_{[T,\infty)}nx_0)=\\
			 &=2\varepsilon+\varepsilon+\s(A_+w)<8\varepsilon.
		\end{align*}
		By \eqref{eq:summable case proof - slack=tau nx_0} we thus have
		\[ |\s(A_+y)-\tau(y)|\leq |\s(A_+y)-\s(A_+nx_0)|+|\tau(nx_0)-\tau(y)|<9\varepsilon. \]
		Since $\varepsilon$ was arbitrary we conclude $\s(A_+y)=\tau(y)$, as claimed.
	\end{proof}
	
	\begin{remark}\label{rmk:minimal set on beta level set}
		We have shown that $\beta$ is constant (zero) on all of $\overline{Nx_0}$. This also follows from the minimality of $\overline{Nx_0}$ together with \eqref{eq:beta horoball}.
		Moreover, the fact that $\beta(a_tz)=\beta(z)+t$ for all $t \in \R$ implies that $a_t y \notin \overline{Nx_0}$ for all $y \in \overline{Nx_0}$ and $t \neq 0$.
	\end{remark}
		
	\subsection{Invariant Measures} In this subsection we show the following:
	\begin{thm}\label{Thm:Existence of new horo measure}
		Let $\Sigma_s$ be a loom surface satisfying the summability condition, then the minimal orbit closure $\overline{Nx_0}$ supports a locally finite $N$-invariant and ergodic measure, $\mu$, which is infinite, conservative and singular with respect to the geodesic flow, that is, 
		\[ a_t.\mu \perp \mu \quad\text{for all } t\neq 0. \]
	\end{thm}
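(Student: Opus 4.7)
The singularity claim $a_t.\mu \perp \mu$ for $t \neq 0$ is immediate from structural results already in hand: by \Cref{rmk:minimal set on beta level set}, $F = \overline{Nx_0} \subset \beta^{-1}(0)$ and $F \cap a_t F = \emptyset$ for every $t \neq 0$, so any $N$-invariant measure $\mu$ supported on $F$ will have $a_t.\mu$ supported on the disjoint set $a_t F$. The substance of the theorem is therefore the construction of a locally finite, $N$-invariant, ergodic, infinite, and \emph{conservative} Radon measure on $F$.

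The most direct candidate --- the pushforward $\mu_0 = \iota_* \mathrm{Leb}_\R$ of Lebesgue measure via $\iota(s) = n_s x_0$ --- is $N$-invariant, infinite, trivially ergodic (concentrated on the single orbit $Nx_0$), and plausibly locally finite: since $\beta \equiv 0$ on $F$ one has $\tau(n_s x_0) = \s(A_+ n_s x_0) \geq 0$; any compact $K \subset F$ lies in some level $\tau^{-1}([0,R])$; and by the Weaving Lemma together with \Cref{prop:finite weaving pattern large gaps slack is sum} and the summability condition, $\{s : \s(A_+ n_s x_0) \leq R\}$ has finite Lebesgue measure. Unfortunately, this naive candidate is \emph{dissipative}: translation of $(\R, \mathrm{Leb})$ admits bounded wandering sets, so bounded positive-measure subsets of $Nx_0$ fail to recur under the $N$-action. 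The measure must therefore be spread across many $N$-orbits in $F$ through a transverse structure.

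The plan is to exploit the transverse structure on $F$ given by the Weaving Lemma: each $y \in F$ carries a tail-class of weaving patterns $W(y) \in 2^\N$, constant on $N$-orbits, which plays the role of a transversal coordinate. One constructs a $\sigma$-finite Radon measure $\nu$ on this (or a related) transversal, naturally adapted to the crossing slacks $\{\s(\eta_k^\pm)\}_k$ (plausibly a Bernoulli-type measure on pattern tails, weighted so that summability governs local mass), and defines $\mu$ locally as the product $\nu \otimes \mathrm{Leb}$ in transversal-times-orbit coordinates. Then $N$-invariance and infinite mass are automatic from the product structure; local finiteness inherits from the slack estimate above, now controlling both the transverse and leafwise directions; conservativity follows from density of each $N$-orbit in $F$ (minimality) combined with non-atomicity of $\nu$; and ergodicity reduces to ergodicity of $\nu$ under the first-return map that the $N$-flow induces on the chosen transversal.

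The main obstacle is the simultaneous construction of $\nu$ and verification of conservativity and ergodicity: $\nu$ must be finite enough to keep $\mu$ locally finite, yet rich enough on transversals to make orbits genuinely recurrent and the transverse action ergodic. Both properties require quantitative recurrence estimates controlling how perturbed horocyclic $N$-orbits thread through weaving neighborhoods in $F$, relying crucially on \Cref{lemma:recurrence iff ray with slack}, the geometry of weaving geodesics, and the summability condition.
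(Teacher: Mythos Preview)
Your singularity argument via \Cref{rmk:minimal set on beta level set} is exactly the paper's. The remainder of your proposal, however, both departs from the paper's route and leaves the central step unfinished.

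The paper never attempts to build the measure explicitly. Existence of a nonzero locally finite $N$-invariant measure on the minimal set $\overline{Nx_0}$ is obtained softly, by invoking the \emph{superamenability} of $\R$ (Rosenblatt; Kellerhals et al.), with a self-contained alternative in the appendix: a Krylov--Bogolyubov-type limit of normalized occupation measures of a single orbit through a precompact section $\Psi$. One then passes to a locally finite ergodic component $\mu$. Infiniteness of $\mu$ is deduced from Ratner's classification of \emph{finite} $N$-invariant measures on $G/\Gamma$, since $\overline{Nx_0}$ is not a homogeneous subspace. Conservativity is automatic: any locally finite dissipative $N$-invariant measure is supported on properly embedded $N$-orbits, while every orbit in the minimal set is dense, hence non-proper. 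No weaving combinatorics, transversal measures, or quantitative recurrence estimates enter at all.

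Your transversal-measure approach could in principle yield a more explicit $\mu$, but as written it is a plan, not a proof: the step you label the ``main obstacle'' --- actually producing $\nu$ and verifying conservativity and ergodicity --- is the entire content, and it is not carried out. That is a genuine gap. A smaller correction: your claim that Lebesgue on the single orbit $Nx_0$ is ``plausibly locally finite'' is in fact false. Since $Nx_0$ is dense in $F$ and not properly embedded, any small flow box in $F$ is traversed by the orbit infinitely many times with visit-lengths bounded below, so compact neighborhoods have infinite $\mu_0$-mass; equivalently, a locally finite dissipative measure would force the orbit to be proper. The obstruction to $\mu_0$ is therefore local finiteness, before one even reaches dissipativity.
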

	
	This theorem provides the first examples of $N$-ergodic and invariant locally finite measures which are neither quasi-invariant with respect to the geodesic flow nor supported on a single closed horocycle.
	\medskip
	
	\begin{proof}[Proof of \Cref{Thm:Existence of new horo measure}]
		The existence of an $N$-invariant locally finite measure supported on  $\overline{Nx_0}$ follows from \cite[Lemma 2.2]{kellerhalsNonsupramenableGroupsActing2013} and the superamenability of the group $\R$ \cite{rosenblattInvariantMeasuresGrowth1974}. See also \cite{MR4498410}. For completeness and accessibility, we provide a short, more hands-on, construction of such measures in \Cref{prop: appendix measure construction} in the appendix.
		
		These statements ensure the existence of an  $N$-invariant, conservative and locally-finite measure supported on $\overline{Nx_0}$. Let $\mu$ denote a locally-finite ergodic component of said measure. By Ratner's classification of finite $N$-invariant measures \cite{ratnerRaghunathansConjecturesRm1992} (and the fact that $\overline{Nx_0}$ is not a homogeneous subspace of $\T^1\Sigma_s$) we conclude $\mu$ is necessarily infinite.
		
		Singularity of $\mu$ with respect to the geodesic flow follows from the fact that $a_t\overline{Nx_0} \cap \overline{Nx_0} = \emptyset$ for all $t \neq 0$, see \Cref{rmk:minimal set on beta level set}, implying that the topological support of $\mu$ is not invariant under any $a_t$ with $t \neq 0$.
	\end{proof}
		
	\begin{remark}\label{rem:Patterson example}
		We note that the notion of a ``trivial'' measure in this paper is strictly broader than the one used by Sarig in \cite{sarigHorocyclicFlowLaplacian2010}, who referred only to measures corresponding to periodic horocycles or orbits based outside the limit set. The measures constructed in \cite[Theorem 3]{sarigHorocyclicFlowLaplacian2010} are trivial in the sense that they are supported on single proper horocycle orbits, which was known by the author. 
		
		The measures constructed in \cite[Theorem 3]{sarigHorocyclicFlowLaplacian2010} are ergodic components of Lebesgue measure on certain surfaces given by Fuchsian groups of the first kind with critical exponent$<\!1/2$, see \cite{pattersonExamplesFuchsianGroups1979}.		
		In \cite[Equation (18)]{pattersonSpectralTheoryFuchsian1977}, Patterson shows that Fuchsian groups $\Gamma$ having critical exponent smaller than one-half satisfy, in the Poincar\'e disk model, 
		\[ \sum_{\gamma \in \Gamma} |\gamma'(\xi)| < \infty \quad \text{for Lebesgue-almost every }\xi \in S^1. \]
		Recall that in the disk model one has $|\gamma'(\xi)|=e^{-\beta_{\xi}(\gamma^{-1}.0,0)}$ where $\beta$ is the Busemann cocycle, see e.g.~\cite[Appendix 1]{sarigHorocycleFlowsSurfaces2019}. Therefore, we have that for Leb-a.e.~$\xi \in \partial \Hplane$, any horocycle tangent to $\xi$ has only finitely many points in $\Gamma.0$ within a bounded distance of it. This in turn implies that such horocycles have no accumulation points in the quotient surface.  
	\end{remark}
	
	\section{Fractional Hausdorff Dimension}\label{sec:distal}
	
	In this section we consider surfaces satisfying the following:
	\begin{definition}
		A loom surface $\Sigma_s$ with $s=(s_j,h_j)_{j \in \N}$ is called \emph{distal} if 
		\[ \inf_{j \in \N} h_j > 0. \]
	\end{definition}
	\noindent Note that under these conditions, the two points $x_0$ and $x_1$ are $A$-distal, i.e.~they satisfy $\inf_{t \to \infty} d(a_tx_0,a_t x_1)>0$.
	\medskip
	
	We observe that \Cref{Fact:asymp_to_lambda} implies, in this case, that any finite slack geodesic ray is eventually asymptotic to either $A_+x_0$ or $A_+x_1$ since the $\varepsilon$-neighborhood of $\T^1_+\lambda$ is disconnected for small enough $\varepsilon$. In other words, all points in $\T^1\Sigma_s$ supporting finite slack rays are contained in one of two distinct $AN$-orbits $ANx_0\sqcup ANx_1$. The function $\beta$ is $N$-invariant and also, by the definition, $A$-equivariant in the sense that for all $y$ and $t \in \R$ we have $\beta(a_ty)=\beta(y)+t$. \Cref{Fact:beta horoball supset} hence implies:
	\begin{lemma}\label{lemma:orbit closure decomp to two P+ orbits}
		If $\Sigma_s$ is a distal loom surface, then
		\[ \overline{Nx_0} \subseteq A_+Nx_0\sqcup A_+Nx_1. \]
	\end{lemma}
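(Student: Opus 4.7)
The plan is to start from $y \in \overline{Nx_0}$, use the distal hypothesis to trap the forward ray $A_+y$ inside a small neighborhood of a single connected component of $\T^1_+\lambda = Ax_0 \cup Ax_1$, translate this tracking statement into membership of $y$ in the corresponding weak stable manifold $ANx_j$, and finally use nonnegativity of $\beta(y)$ together with the $A$-equivariance of $\beta$ to pin down the correct sign $y \in A_+Nx_j$.

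First, I would take $y \in \overline{Nx_0}$ and invoke Fact~\ref{Fact:beta horoball supset} to obtain $\beta(y) \geq 0 > -\infty$. As noted in the paragraph after the definition of distal, $x_0$ and $x_1$ are $A$-distal, so $\delta := \inf_t d(a_tx_0, a_tx_1) > 0$. Consequently, for $\varepsilon < \delta/3$ the $\varepsilon$-neighborhood $U_\varepsilon$ of $\T^1_+\lambda$ in $\T^1\Sigma_s$ splits as a disjoint union $U_\varepsilon^{(0)} \sqcup U_\varepsilon^{(1)}$ of $\varepsilon$-tubes around $Ax_0$ and $Ax_1$ respectively. Fact~\ref{Fact:asymp_to_lambda} then produces $T > 0$ with $a_t y \in U_\varepsilon$ for all $t \geq T$, and continuity of $t \mapsto a_t y$ together with the separation of the two components forces $A_{[T,\infty)}y$ to lie entirely in one of them, say $U_\varepsilon^{(j)}$ with $j \in \{0,1\}$.

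The second and main step is to upgrade this tracking statement to $y \in ANx_j$. Lifting to the universal cover $\Hplane$, I would continuously lift $(a_t y)_{t \geq T}$ so that it remains in the $\varepsilon$-tube around a fixed lift of $Ax_j$; exponential divergence of geodesic rays with distinct forward endpoints in $\partial\Hplane$ then forces the two lifts to share a forward endpoint, so the lift of $y$ lies on the weak stable horocycle through some $\widetilde{a_s x_j}$. Projecting back, $y = a_s n x_j$ for some $s \in \R$ and $n \in N$. Using the $N$-invariance and $A$-equivariance of $\beta$ and the fact $\beta(x_j)=0$, one computes $\beta(y) = \beta(a_s n x_j) = s + \beta(n x_j) = s$, and $\beta(y) \geq 0$ then yields $s \geq 0$, i.e., $y \in A_+Nx_j$.

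Disjointness of the two sets on the right-hand side is immediate since members of $A_+Nx_j$ share the forward endpoint at infinity of $x_j$, and the endpoints of $x_0$ and $x_1$ are distinct (they lie on the two separated copies of $\ell$). The main obstacle is the middle step, namely converting uniform $\varepsilon$-closeness of the forward geodesic to genuine weak-stable leaf membership; the rest is bookkeeping with the distal hypothesis and the functorial properties of $\beta$.
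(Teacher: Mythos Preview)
Your argument is correct and follows essentially the same route as the paper: the paper's proof is the short paragraph immediately preceding the lemma, which observes that $\beta(y)\ge 0$ (Fact~\ref{Fact:beta horoball supset}) forces $A_+y$ into an $\varepsilon$-neighborhood of $\T^1_+\lambda$ (Fact~\ref{Fact:asymp_to_lambda}), that distality disconnects this neighborhood so $A_+y$ is asymptotic to one $A_+x_j$ and hence $y\in ANx_j$, and finally that $N$-invariance and $A$-equivariance of $\beta$ with $\beta(x_j)=0$ give $\beta(y)=s\ge 0$ for $y=a_s n x_j$. Your write-up simply fills in the lifting argument for the step ``asymptotic $\Rightarrow$ same $AN$-orbit'' that the paper leaves implicit; the one minor imprecision is that your $\delta=\inf_t d(a_tx_0,a_tx_1)$ does not immediately separate the full tubes (one needs $\inf_{s,t} d(a_sx_0,a_tx_1)>0$, which does follow from $\inf_j h_j>0$), but this is easily fixed and the paper itself only asserts disconnectedness ``for small enough $\varepsilon$''.
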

	
	In this section we are interested in the Hausdorff dimension of $\overline{Nx_0}$, we may thus consider each component of the orbit closure separately. Each $A_+N$-orbit in $\T^1\Sigma_s$ is a locally isometric projection of countably many $A_+N$-orbits in $\T^1\Hplane$. Each lift of $\overline{Nx_0} \cap A_+Nx_j$ is a product set which is $N$-saturated, implying
	\[ \dim_\mathrm{H}\overline{Nx_0}\cap A_+Nx_j = 1+\dim_\mathrm{H}\overline{Nx_0}\cap A_+x_j. \]
	Hence our analysis boils down to understanding the sets
	\[ \Delta_j=\{t \geq 0 : a_tx_j \in \overline{Nx_0}\cap A_+x_j\}. \]
	See \cite[\S2.2]{farreClassificationHorocycleOrbit2024} and \cite[\S7]{farreMinimizingLaminationsRegular2023a} for another account of such sets (under different notations).
	\medskip
	
	The main theorem of this section, proven in \S\ref{Subsec:proof of main distal thm}, is the following:
	\begin{theorem}\label{thm:main distal surface}
		Let $\Sigma_s$ be a distal loom surface. Let $E=\mathrm{accum}\left(\s(\eta_k^\pm)\right)_{k \in \N}$ be the accumulation points in $\R$ of the sequence of slacks of crossings. Then
		\begin{equation}\label{eq:distal thm Delta_j=sums}
			\Delta_0=\bigcup_{k =1}^\infty 2kE \qquad \text{and}\qquad \Delta_1=\bigcup_{k=0}^\infty (2k+1)E,
		\end{equation}
		where $mE:=\underbrace{E+\cdots+E}_m$.
	\end{theorem}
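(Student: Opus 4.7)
The plan is to apply \Cref{lemma:recurrence iff ray with slack} to reformulate membership: $t \in \Delta_j$ precisely when there exist $y_n \to x_j$ with $A_+y_n$ forward-asymptotic to $A_+x_0$ and $\s(A_+y_n) \to t$. Distality together with \Cref{Fact:asymp_to_lambda} confines any such finite-slack ray to a single component of a small neighborhood of $\T^1_+\lambda$, so \Cref{lemma:weaving} produces a finite weaving pattern $W_n$ of length $m_n$ tracking the forward behavior of $A_+y_n$. By \Cref{Fact:equation slack to h_k} and the distality bound $\inf_k h_k > 0$, each crossing contributes at least $2\ln\cosh d_{\Hplane}(0, i\inf_k h_k) > 0$ to the slack, so the $m_n$ are bounded; after passing to a subsequence we may take $m_n = m$ constant. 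Since $y_n$ lies on side $j$ and $A_+y_n$ ends asymptotic to $A_+x_0$ on side $0$, parity forces $m \equiv j \pmod 2$.

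For the inclusion $(\supseteq)$, fix $m$ of the correct parity and $e_1, \dots, e_m \in E$. Choose $k_{n,1} < \cdots < k_{n,m}$ with each $k_{n,i} \to \infty$ and $\s(\eta_{k_{n,i}}^{\pm}) \to e_i$; the loom gap condition then makes \Cref{prop:finite weaving pattern large gaps slack is sum} applicable for large $n$. Put $\sigma_0 = +$ and $\sigma_1 = -$ so the past of the weaving geodesic $\eta_{W_n, \sigma_j}$ is asymptotic to $A_-x_j$, and take $y_n$ to be its unit tangent at $\tau = 0$. As $k_{n,1} \to \infty$ this past agrees with $A x_j$ on longer and longer initial intervals; thus $y_n \to x_j$ and $\s(A_-y_n) \to 0$. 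The parity of $m$ forces the forward asymptote of $\eta_{W_n, \sigma_j}$ to be $A_+x_0$, and \Cref{prop:finite weaving pattern large gaps slack is sum} gives $\s(A_+y_n) = \s(\eta_{W_n,\sigma_j}) - \s(A_-y_n) \to \sum_{i=1}^m e_i$. \Cref{lemma:recurrence iff ray with slack} then places $\sum e_i$ in $\Delta_j$.

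For the inclusion $(\subseteq)$, take $t > 0$ in $\Delta_j$ and a sequence $y_n \to x_j$ as above, reduced to constant weaving length $m$ of the correct parity. The key step is that $k_{n,1} \to \infty$: since $y_n \to x_j$ and $A_+x_j$ meets no disk $D_{h_k}(s_k)$, $A_+y_n$ converges uniformly on compact intervals to $A_+x_j$, so the first crossing of $A_+y_n$ occurs at $\tau$-position tending to $+\infty$, forcing $k_{n,1} \to \infty$ because $s_k \to +\infty$. The strict monotonicity of weaving patterns then yields $k_{n,i} \to \infty$ for every $i$, and the loom gap condition makes \Cref{prop:finite weaving pattern large gaps slack is sum} applicable. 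Since $(\s(\eta_{k_{n,i}}^{\pm}))_n$ is bounded, a further subsequence gives $\s(\eta_{k_{n,i}}^{\pm}) \to e_i$; the condition $k_{n,i} \to \infty$ places each $e_i \in E$. Combining with vanishing of the backward slack of the ambient $\eta_{W_n,\sigma_j}$ produces $\s(A_+y_n) \to \sum_{i=1}^m e_i$, hence $t = \sum_{i=1}^{m} e_i \in mE$.

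The main obstacle is the $(\subseteq)$ direction: one must simultaneously establish that every crossing index $k_{n,i}$ tends to infinity (ensuring the limit slacks are genuine accumulation points of the defining sequence) and that the full slack $\s(A_+y_n)$ is captured by the crossings rather than lost in the approach segment or in the asymptotic tail. Both features depend essentially on distality (which supplies the uniform lower bound on single-crossing slack needed to bound $m$) and on the loom hypothesis $d_{\Hplane}(\partial D_{h_k}, \partial D_{h_{k+1}}) \to \infty$ (which lets \Cref{prop:finite weaving pattern large gaps slack is sum} take effect in the limit).
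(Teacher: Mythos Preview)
Your proposal is correct and follows essentially the same route as the paper: both directions hinge on \Cref{lemma:recurrence iff ray with slack_loom surface} to translate membership in $\Delta_j$ into the existence of approximating rays with prescribed slack, the Weaving Lemma together with distality to force finite weaving patterns of bounded length and the correct parity, and \Cref{prop:finite weaving pattern large gaps slack is sum} to identify the limiting slack with a sum of elements of $E$. The only cosmetic difference is in the $(\subseteq)$ step: the paper perturbs each $y_m$ along a short expanding horocycle so that the full line $Ay_m$ \emph{equals} a weaving geodesic $\eta_W^+$, whereas you compare $A_+y_n$ to the ambient $\eta_{W_n,\sigma_j}$ via proximity at $x_j$ and vanishing backward slack; both devices accomplish the same bookkeeping.
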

	
	\begin{remark}
		Note that the distality assumption ensures $E>\delta > 0$ for some $\delta$ and hence for all $T>0$
		\[ \bigcup_{m \in \N} mE \cap [0,T] \subseteq \bigcup_{m=1}^{\lceil \frac{T}{\delta}\rceil} mE, \]
		implying in particular that both $\Delta_j$ are closed.
	\end{remark}
	
	The vast flexibility of our construction allows us to exhibit any compact\footnote{Our definition of loom surfaces is not the most general imaginable. In particular, the assumption that the $h_k$ are uniformly bounded away from $\frac{\pi}{2}$ is merely imposed for simplicity. One can construct surfaces with an unbounded sequence of $h_k$ under which additional conditions on $|s_{k+1}-s_k|\gg 0$ would ensure all claims in this paper will hold.} subset of $(0,\infty)$ as $E$, that is, we have:
	\begin{prop}\label{prop: distal loom surface slacks}
		For any compact set $E \subset (0,\infty)$ there exists a distal loom surface $\Sigma_s$ in which $E=\mathrm{accum}\left(\s(\eta_k^\pm)\right)_{k \in \N}$.
	\end{prop}
	
	\begin{proof}
		Given $E$, choose a finite or countable dense subset $\{e_1,e_2,...\}$ of $E$. By solving for $h$ in \Cref{Fact:equation slack to h_k} we can construct $\Sigma_s$ where every $e_j$ appears infinitely many times as the slack of different crossings. The proposition follows.
	\end{proof}
	
	\subsection{Examples}
	These results provide a rich source of interesting and non-regular examples. Here are a few:
	\subsubsection{Fixed Fractional Hausdorff Dimension}
	For any $\alpha \in [0,1]$ there exists a compact set $E'$ with $\dim_{\mathrm{H}}mE'=\alpha$ for all $m \geq 1$, see \cite{schmelingDimensionIteratedSumsets2010,kornerHausdorffDimensionSums2008}. 
    Since $E'$ may contain $0$, and the accumulations of slacks of geodesics $\eta_k^\pm$ are all strictly positive in a \emph{distal} loom surface, we take $E=1+E'$.  By Proposition \ref{prop: distal loom surface slacks}, there is a distal loom surface with $E=\mathrm{accum}\left(\s(\eta_k^\pm)\right)_{k \in \N}$.  Since $\dim_\mathrm{H} mE = \dim_\mathrm{H} mE'$, \Cref{thm:main distal surface} gives the following:
	\begin{cor}
		For any $\alpha \in [0,1]$ there exists a distal loom surface having $$\dim_\mathrm{H}\overline{Nx_0}=1+\alpha.$$
	\end{cor}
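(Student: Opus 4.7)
The plan is to assemble the ingredients the paper has already lined up just before the statement. First, by the Schmeling and K\"orner results cited above, for each $\alpha \in [0,1]$ I can fix a compact set $E'\subset [0,\infty)$ such that $\dim_\mathrm{H}(mE') = \alpha$ for every integer $m \geq 1$. Then I set $E := 1 + E'$, which is a compact subset of $(0,\infty)$, so by \Cref{prop: distal loom surface slacks} there exists a distal loom surface $\Sigma_s$ with $E = \mathrm{accum}(\s(\eta_k^\pm))_{k \in \N}$.

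Next I would invoke \Cref{thm:main distal surface} to get
\[
\Delta_0 = \bigcup_{k=1}^{\infty} 2kE, \qquad \Delta_1 = \bigcup_{k=0}^{\infty} (2k+1)E.
\]
Since $mE = m + mE'$ is a translate of $mE'$ for every $m \geq 1$, and Hausdorff dimension is translation invariant, $\dim_\mathrm{H}(mE) = \dim_\mathrm{H}(mE') = \alpha$. Countable stability of Hausdorff dimension then gives $\dim_\mathrm{H}(\Delta_j) = \alpha$ for $j = 0,1$.

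Finally I would convert this into a statement about $\overline{Nx_0}$. By \Cref{lemma:orbit closure decomp to two P+ orbits}, $\overline{Nx_0} \subseteq A_+Nx_0 \sqcup A_+Nx_1$; each $A_+N$-orbit in $\T^1\Sigma_s$ is a countable disjoint union of locally isometric images of $A_+N$-orbits in $\T^1\Hplane$, and on each lift the intersection with $\overline{Nx_0}$ is $N$-saturated, hence a product of a subset of $N$ (dimension $1$) with $\Delta_j$ in the $A_+$ direction. Therefore
\[
\dim_\mathrm{H}\bigl(\overline{Nx_0} \cap A_+Nx_j\bigr) = 1 + \dim_\mathrm{H}(\Delta_j) = 1 + \alpha,
\]
and taking the max over $j = 0,1$ together with countable stability yields $\dim_\mathrm{H}(\overline{Nx_0}) = 1 + \alpha$.

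Since the statement is essentially a bookkeeping consequence of the preceding structural results, I do not expect a real obstacle. The only mildly delicate point is making sure the $E'$ produced by Schmeling--K\"orner sits in $[0,\infty)$ so the translate $E = 1 + E'$ lies in $(0,\infty)$ and is eligible for \Cref{prop: distal loom surface slacks}; this is immediate from the cited constructions.
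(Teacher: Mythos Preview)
Your proposal is correct and follows essentially the same route as the paper: pick $E'$ from the Schmeling--K\"orner constructions with $\dim_{\mathrm H}mE'=\alpha$ for all $m\ge 1$, translate to $E=1+E'\subset(0,\infty)$, realize it via \Cref{prop: distal loom surface slacks}, and then combine \Cref{thm:main distal surface} with translation invariance, countable stability, and the product formula $\dim_{\mathrm H}(\overline{Nx_0}\cap A_+Nx_j)=1+\dim_{\mathrm H}\Delta_j$. The only cosmetic difference is that you spell out the countable stability and product-structure steps explicitly, whereas the paper absorbs them into the discussion preceding the corollary.
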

	
	\subsubsection{Locally Varying Dimensions}\label{subsec:varying dimensions}

    In fact, in \cite{schmelingDimensionIteratedSumsets2010} the authors construct compact sets $E'\subset [0,1]$ satisfying:
	\[ \alpha_m=\dim_\mathrm{H}mE' \quad,\quad \beta_m=\underline{\dim}_\mathrm{M}mE' \quad,\text{ and}\quad \gamma_m=\overline{\dim}_\mathrm{M}mE', \]
	where $\overline{\dim}_\mathrm{M}$ and $\underline{\dim}_\mathrm{M}$ denote the upper and lower Minkowski dimensions, respectively, and where 
	\[ 0\leq \alpha_m\leq \beta_m \leq \gamma_m \leq 1 \]
	are \emph{arbitrary} non-decreasing sequences (where $\{\beta_m\}$ and $\{\gamma_m\}$ are required to satisfy some mild growth constraints).  See also \cite{kornerHausdorffDimensionSums2008}.

    Applying \Cref{prop: distal loom surface slacks} and \Cref{thm:main distal surface} to $E = E'+1$, we can produce loom surfaces supporting horocyclic orbit closures having locally varying Hausdorff dimensions and such that the Hausdorff dimension disagrees with the Minkowski dimensions locally, i.e., the sets $\overline{Nx_0} \cap B_r(a_tx_0)$ have varying different dimensions depending on $t \geq 0$ and $r>0$.\footnote{It is not immediately evident what the local Minkowski dimensions are, as these do not comport well with countable unions. Nevertheless, choosing $\alpha_m < \beta_m$ implies strict inequalities for $\dim_{\mathrm{H}} < \overline{\dim}_{\mathrm{M}}$.}
	
	\subsubsection{Discrete Sub-Invariance}
	Elements of $\Delta_0$ correspond to sub-invariance symmetry of $\overline{Nx_0}$, that is, $t \in \Delta_0$ are exactly those numbers for which
	\[ a_t \overline{Nx_0} \subset \overline{Nx_0}, \]
	see \cite[\S7]{farreMinimizingLaminationsRegular2023a}. By constructing $\Sigma_s$ with $E=\{T_0\}$ for some $T_0>0$ we obtain a horocyclic orbit closure with a discrete semigroup of sub-invariance. This is in contrast with orbit closures in $\Z$-covers of compact surfaces, see \cite[Prop. 7.20]{farreMinimizingLaminationsRegular2023a}.
	
	\subsection{Proof of \Cref{thm:main distal surface}}\label{Subsec:proof of main distal thm}
	We proceed with the proof of the main theorem of this section. A key component is \Cref{lemma:recurrence iff ray with slack}, which can rephrased in our setting as follows:
	\begin{lemma}\label{lemma:recurrence iff ray with slack_loom surface}
			For $j=0,1$ and $t \geq 0$, the point $a_tx_j$ is contained in $\overline{Nx_0}$ if and only if there exists $y_m \to x_j$ with $A_+y_m$ asymptotic to $A_+x_0$ and  $\s(A_+y_m)\to t$.
	\end{lemma}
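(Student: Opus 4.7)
The plan is to obtain this lemma as a direct specialization of \Cref{lemma:recurrence iff ray with slack} applied with $x := x_0$ and $y := x_j$. First I would verify the two hypotheses of that lemma in the loom-surface setting: both $x_0, x_1$ lie in $\T^1_+\lambda$ immediately from $\T^1_+\lambda = Ax_0 \cup Ax_1$, and $\tau(x_0) = \tau(x_1) = 0$ from the chosen parameterization $t \mapsto q(p(a_tx_j))$ of $\ell$. Since the excerpt explicitly records that the elementary proof of \Cref{lemma:recurrence iff ray with slack} applies verbatim in the loom setting, this specialization already delivers the conclusion.

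For transparency I would also record the short underlying two-step argument, which hinges on the identity $\s(A_+y) = \tau(y) - \beta(y)$, immediate from rearranging $\beta(y) = \tau(y) - \s(A_+y)$, combined with the $N$-invariance and $A$-equivariance of $\beta$ and the fact $\beta(x_0) = 0$ (which follows from $\tau(x_0) = 0$ and $\s(A_+x_0) = 0$ by equation \eqref{eqn: zero slack in lambda}; cf.\ \Cref{Fact:beta horoball supset}). For the forward direction, I would pick $n_m \in N$ with $n_m x_0 \to a_t x_j$ and set $y_m := a_{-t} n_m x_0 \to x_j$; stability of $N$ yields that $A_+ y_m$ is asymptotic to $A_+ x_0$, while $\beta(y_m) = -t$ (by equivariance) gives $\s(A_+ y_m) = \tau(y_m) + t \to t$. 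For the reverse direction, given $y_m$ as in the hypothesis, I would lift to $\T^1\Hplane$ and use the local $NA$-product transverse to the common forward endpoint at infinity of $A_+y_m$ and $A_+x_0$ to write $y_m = a_{\sigma_m} n_m x_0$ for unique $\sigma_m \in \R$ and $n_m \in N$; then $\beta(y_m) = \sigma_m$, so $\tau(y_m) - \sigma_m = \s(A_+y_m) \to t$ together with $\tau(y_m) \to 0$ forces $\sigma_m \to -t$, and hence $n_m x_0 = a_{-\sigma_m} y_m \to a_t x_j$, placing $a_t x_j$ in $\overline{N x_0}$.

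The only potentially delicate point is the product-structure identification $y_m = a_{\sigma_m} n_m x_0$ used in the reverse direction. I would handle it by choosing compatible lifts so that $A_+ \tilde y_m$ and $A_+ \tilde x_0$ in $\T^1\Hplane$ share a forward endpoint at infinity, then invoking the standard $NA$-parameterization of stable-leaf/geodesic directions transverse to that endpoint. No loom-specific geometry enters this step, which is precisely why the cited verbatim transfer is valid; the loom geometry only becomes relevant later, when \Cref{lemma:recurrence iff ray with slack_loom surface} is combined with the Weaving Lemma to identify $\Delta_j$ with sums of crossing-slacks.
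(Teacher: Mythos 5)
Your proposal is correct and follows the paper's approach exactly: \Cref{lemma:recurrence iff ray with slack_loom surface} is obtained by specializing \Cref{lemma:recurrence iff ray with slack} to $x := x_0$ and $y := x_j$, both of which lie in $\T^1_+\lambda$ with $\tau(x_0) = \tau(x_j) = 0$. Your expanded two-step argument via $\beta(y) = \tau(y) - \s(A_+y)$, its $N$-invariance and $A$-equivariance, and the $AN$-parameterization transverse to the common forward endpoint is a correct and welcome elaboration of the ``elementary proof'' the paper cites but does not reproduce.
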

	
	\begin{proof}[Proof of \Cref{thm:main distal surface}]
		Let us first consider the claim
		\[ \Delta_0=\bigcup_{k \in \N} 2kE. \]
		Let $t \in 2kE$ with $t=\sum_{j=1}^{2k} e_j$ where $e_j \in E$. Let $\varepsilon>0$ and let $S>0$ be the constant in \Cref{prop:finite weaving pattern large gaps slack is sum} corresponding to $\varepsilon$ and $m=2k$. Let $(s_k)$ be the sequence of horizontal positions of boundary components in the definition of $\Sigma_s$. Let $T>0$ be large enough that $|s_k-s_l|>S$ holds for all distinct $s_k,s_l > T$.
		By the definition of $E$ there exists for each $e_j$ a crossing $\eta^\pm_{k_j}$ with $|\s(\eta_{k_j}^\pm)-e_j|<\varepsilon/2k$ and with $s_{k_j}>T$. Hence the weaving geodesic $\eta_W^+$ with weaving pattern $\{k_1,...k_{2k}\}$ satisfies the conditions of \Cref{prop:finite weaving pattern large gaps slack is sum} implying:
		\[ \left|\s(\eta_W^+)-\sum_{j=1}^{2k}e_j\right| < \left|\s(\eta_W^\pm)-\sum_{j=1}^{2k}\s(\eta^\pm_{k_j})\right|+\varepsilon <2\varepsilon. \]
		Notice that the weaving geodesic $ \eta_W^+$ is both backward and forward asymptotic to $Ax_0$. In particular, as $T$ increases the distance between $x_0$ and such $\eta_W^+$ tends to 0. We may thus choose $T$ so large such that $d(x_0,y)<\varepsilon$ for some $y\in \eta_W^+$ satisfying $|\s(A_+y)-\s(\eta_W^+)|<\varepsilon$. Since $\varepsilon>0$ was arbitrary we conclude from \Cref{lemma:recurrence iff ray with slack_loom surface} that $a_tx_0 \in \overline{Nx_0}$, i.e.~that $t\in\Delta_0$.
		
		In the other direction, let $t \in \Delta_0$ then by \Cref{lemma:recurrence iff ray with slack_loom surface} there exist a sequence of $y_m \to x_0$ satisfying $\s(A_+y_m) \to t$. Denote $\rho=t+1$ and let $S>0$ be the constant from \Cref{lemma:weaving}. For all $0<\varepsilon<1$ and all large enough $m$ we have $\s(A_+y_m)<\rho$ and $d(a_{S+1}y_m,a_{S+1}x_0)<\varepsilon$ implying in particular that $\tau(a_{S+1}y_m)>S$. Hence by \Cref{lemma:weaving} we conclude that $A_{[S+1,\infty)}y_m$ and hence $A_+y_m$ are weaving geodesic rays. Since $\Sigma_s$ is distal we conclude that $A_+y_m$ has a finite weaving pattern. Moreover, after a possible arbitrarily small perturbation (along a short expanding horocycle) we may assume that $y_m$ is backward asymptotic to $Ax_0$. Therefore $Ay_m=\eta_W^+$ for some finite even weaving pattern $W$. By identical considerations as before we can approximate $t\approx \s(A_+y_m)\approx \s(\eta_W^+)$ by an even sum of slacks of crossings, as claimed.

        Similar considerations yield the description of $\Delta_1$ in the statement of the theorem.  
	\end{proof}
	
	\newpage
    \appendix
	\section{Construction of Invariant Measure}\label{Appendix}
	
	In this section we prove the existence of a locally finite conservative $N$-invariant measure supported on the $N$-minimal set constructed in \Cref{Thm:summable case}. We first prove a more general proposition concerning continuous flows having a ``nice'' section, and then move on to apply the proposition to our setting.
	\medskip
	
	Let $X$ be a locally compact second countable Hausdorff space together with a continuous $\R$-action (flow) $\varphi_t$.
    Given a subset $I$ of $\R$ and $F \subseteq X$ we denote $\varphi_I F := \{ \varphi_t x: t \in I, x \in F \}$.
    A subset $Y \subseteq X$ is called $\varphi$-minimal if $Y=\overline{\varphi_{\R} y}$ for all $y \in Y$. 
    A (partial) section $\Psi\subseteq X$ for the $\varphi$-flow is a subset satisfying that $\left\{ t \in \R : \varphi_t(x) \in \Psi \right\}$ is either discrete in $\R$ or empty, for all $x \in X$.
    
    \begin{prop}\label{prop: appendix measure construction}
    	Let $Y\subseteq X$ be a $\varphi$-minimal set consisting of more than one $\varphi$-orbit. Assume there exists a precompact section $\Psi \subseteq X$ satisfying the following:
    	\begin{enumerate}
    		\item for all $R>0$ the set $\varphi_{(-R,R)}\Psi$ is open in $X$; and 
    		\item $Y\cap \overline{\Psi}=Y\cap \Psi \neq \emptyset$.
    	\end{enumerate}
    	Then $Y$ supports a locally finite $\varphi$-invariant conservative measure.
    \end{prop}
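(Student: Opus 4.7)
The strategy is to construct the desired measure as a suspension of a $T$-invariant Borel probability measure on the compact section $K := Y \cap \Psi = Y \cap \overline{\Psi}$, where $T: K \to K$ is the first-return map. I would start by checking that $K$ is compact and non-empty using hypothesis (2): $K$ is the intersection of the closed set $Y$ with the compact set $\overline{\Psi}$. The $\varphi$-minimality of $Y$ together with the assumption of more than one orbit implies every orbit is dense and non-closed, hence accumulates on every point of $K$. Combining this with hypothesis (1) (which ensures $\varphi_{(-\epsilon,\epsilon)}\Psi$ is an open flow-box neighborhood of $K$), accumulation forces actual entries into $\Psi$; the section property makes these returns discrete, and minimality forces infinitely many in both forward and backward time. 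Hence the first-return time $R: K \to (0,\infty)$ and the first-return map $T = \varphi_R : K \to K$ are defined everywhere on $K$, and $Y = \varphi_{\R} K$.

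I would next argue that $R$ is continuous on $K$, so that $T$ is continuous. Upper semi-continuity follows from condition (1): given $R(y) = r$, the point $\varphi_r y \in \Psi$ lies in the open set $\varphi_{(-\epsilon,\epsilon)}\Psi$, so for $y_n \to y$ we have $\varphi_r y_n = \varphi_{\sigma_n}\psi_n$ with $\psi_n \in \Psi$ and $\sigma_n \to 0$, yielding $R(y_n) \le r - \sigma_n \to r$. Lower semi-continuity comes from condition (2): an unexpectedly early return $R(y_n) \to r' < R(y)$ for $y_n \to y$ would produce a limit $\varphi_{r'} y \in Y \cap \overline{\Psi} = K \subseteq \Psi$, contradicting the definition of $R(y)$; the degenerate collapse $r' = 0$ is ruled out by a careful application of the section property on the compact set $K$. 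With $T$ continuous on the compact metrizable space $K$, the Krylov-Bogolyubov theorem supplies a $T$-invariant Borel probability measure $\nu$ on $K$ as a weak-$\ast$ accumulation point of Cesàro averages along any $T$-orbit.

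Finally, I would define the measure $\mu$ on $X$ by
\[
\mu(B) := \int_K \lambda\bigl(\{s \in [0, R(y)) : \varphi_s y \in B\}\bigr)\, d\nu(y), \qquad B \subseteq X \text{ Borel},
\]
where $\lambda$ denotes Lebesgue measure. A standard change-of-variables using $T$-invariance of $\nu$ shows $\mu$ is $\varphi$-invariant, and it is nonzero when applied to any thin flow-box neighborhood of a point in $K$, with support contained in $Y = \varphi_{\R} K$. Local finiteness follows by covering any compact $L \subseteq X$ with finitely many flow-boxes $\varphi_{(a_i, b_i)} U_i$ (with $U_i \subseteq \Psi$ open and $b_i - a_i$ bounded, using condition (1) and compactness of $L$): $\mu$ is finite on each such box. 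Conservativity follows from Poincaré recurrence for the finite measure $\nu$ under $T$, pushed through the suspension: a positive-$\mu$-measure Borel subset of $Y$ pulls back to a positive-$\nu$-measure subset of $K$ whose points recur under $T$, hence under $\varphi$. The main technical obstacle lies in step 2, namely securing the continuity of $R$ --- especially ruling out the collapse $R(y_n) \to 0$ along convergent sequences --- which requires pressing hypotheses (1) and (2) together with the section property on the compact $K$.
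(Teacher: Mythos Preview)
Your strategy differs substantially from the paper's, which never introduces a first-return map. Instead the paper fixes an orbit, forms normalized occupation measures $\nu_T^R$ on the thickened boxes $B_R=\varphi_{(-R,R)}\Psi$, proves these are asymptotically tight on $B_R$ (each visit to $B_R$ lasts at least $2R$ while at most $4\eta$ of that visit can lie in $B_R\setminus B_{R-\eta}$), extracts weak-$*$ limits that are invariant along flow lines inside $B_R$, and then patches the limits across increasing $R$ via a diagonal argument to obtain a globally $\varphi$-invariant Radon measure.

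Your approach has a genuine gap, and it is not the one you flag. Your upper semi-continuity argument for $R$ is actually correct: if $y_n\to y$ in $K$ and $R(y)=r<\infty$, then $\varphi_r y_n\to\varphi_r y\in K\subset B_\epsilon$, and condition~(1) yields $\psi_n=\varphi_{r-\sigma_n}y_n\in Y\cap\Psi=K$ with $|\sigma_n|<\epsilon$, so $R(y_n)\le r+\epsilon$. But now observe what this forces: an upper semi-continuous finite-valued function on the compact set $K$ is \emph{bounded}, say $R\le M$, and since every point of $Y$ lies on an orbit segment between two consecutive hits of $K$, one gets $Y=\varphi_{[0,M]}K$, hence $Y$ is compact. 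The proposition, however, is written precisely for non-compact $Y$ --- the paper says so explicitly just after the statement --- and in the intended application $Y=\overline{Nx_0}$ cannot be compact, since a finite $N$-invariant probability measure on it would be algebraic by Ratner's theorem, contradicting the non-homogeneity established in Theorem~3.1 (this is exactly the argument the paper uses in the proof of Theorem~3.3 to conclude $\mu$ is infinite).

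The resolution is that $R$ is not everywhere finite on $K$. Minimality guarantees only that the \emph{two-sided} orbit $\varphi_{\R}y$ is dense in $Y$; it does not guarantee the forward half-orbit is dense when $Y$ is non-compact. So your sentence ``minimality forces infinitely many in both forward and backward time'' is where the argument fails: there exist $y\in K$ whose forward orbit leaves every compact set and never re-enters $B_\epsilon$, so $T(y)$ is undefined and Krylov--Bogolyubov for $T$ on $K$ is unavailable. The paper's occupation-measure construction sidesteps this entirely, because it only tallies time spent in $B_R$ and needs no control on individual return times.
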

    
    Whenever the set $Y$ is compact then the classical Krylov–Bogolyubov theorem ensures the existence of an invariant probability measure. The proposition above thus deals  with the case where $Y$ is non-compact and where the time orbits spend in any compact set may be of zero density. 
    
    \begin{proof}
    	As explained above, we may assume that $Y$ is non-compact.
    	Fix some $y_0 \in Y$ and denote the open set $B_R=\varphi_{(-R,R)}\Psi$ for any $R>0$. We begin by observing that the orbit $\varphi_\R y_0$ spends an infinite amount of time in $B_R$ for any $R>0$. Indeed, since $B_R$ is open and intersects $Y$ non-trivially we know by the minimality of $Y$ and the fact that $Y$ is not one orbit, that $\varphi_\R y_0$ returns in an unbounded set of times to $B_R$. Each passage of $\varphi_\R y_0$ in $B_R$ is at least of length $2R$, implying 
    	\[ \left|\{t \in \R : \varphi_t y_0 \in B_R\}\right| = \infty. \]
    	    		
    	Given $R>0$, consider the following ``statistical'' probability measures:
    	\[ \nu^R_T = \frac{\int_{-T}^T \mathbbm{1}_{B_R}(\varphi_t y_0) \delta_{\varphi_t y_0}dt}{\int_{-T}^T \mathbbm{1}_{B_R}(\varphi_t y_0)dt} \]
    	where, $\delta_x$ denotes Dirac measure at $x \in X$. In other words, $\nu^R_T$ measures normalized arc-length along $\varphi_{(-T,T)}y_0 \cap B_R$.
    	\medskip 
    	
   		Fix $R>0$. We claim the following:
   		\medskip
   		
   		\noindent \underline{\textbf{Claim 1:}} The family of measures $\nu^R_T$ is asymptotically tight in $B_R$, that is, for every $\varepsilon>0$ there exists a compact subset $K_\varepsilon \subset B_R$ satisfying
   		\[ \nu^R_T(K_\varepsilon)\geq 1-\varepsilon \quad \text{for all large enough } T>0. \] 
    	\medskip 
    	
    	Given $0<\varepsilon$ set $0< \eta < \frac{1}{4}\varepsilon \cdot R$ and 
    	\[ K_\varepsilon =Y \cap \overline{B_{R-\eta}}. \]
    	By construction $K_\varepsilon$ is compact. Assumption (2) of the proposition implies
    	\[ K_\varepsilon =Y \cap \varphi_{[-R+\eta,R-\eta]}\overline{\Psi} = Y \cap \varphi_{[-R+\eta,R-\eta]}\Psi, \]
    	that is, $K_\varepsilon$ is contained in $B_R$.

    	For any $T>0$, the set of $t$'s in $(-T,T)$ for which $\varphi_t y_0 \in B_R$ is a union of $m$ open sub-intervals $I_1,...,I_m$ (finitely many), each of which is either bounded by $\pm T$ or is of length$\geq 2R$. Notice that by the definition of $B_R$, whenever $(t-2\eta,t+2\eta) \subseteq I_j$ for some $t$ then $\varphi_t y_0=\varphi_s z$ for some $s \in (-R+\eta,R-\eta)$ and $z \in \Psi$, i.e.~$\varphi_t y_0 \in B_{R-\eta}$. Hence, for each $1 \leq j \leq m$ 
    	\[ \left|\left\{t \in I_j : \varphi_t y_0 \in B_R \smallsetminus B_{R-\eta}\right\}\right| \leq 4\eta. \]
    	We therefore have
    	\begin{align*}
    		1-\nu^R_T(K_\varepsilon) &\leq \nu^R_T (B_R \smallsetminus B_{R-\eta}) \leq \\ 
    		&\leq \frac{1}{\sum_j |I_j|} \sum_j \left|\left\{t \in I_j : \varphi_t y_0 \in B_R \smallsetminus B_{R-\eta}\right\}\right| \leq \\
    		&\leq \frac{4\eta \cdot m}{2R \cdot (m-2)} < \frac{\varepsilon}{2} \cdot \frac{m}{m-2},
    	\end{align*}
    	where $m-2$ comes from omitting up to two $I_j$'s bounded by $\pm T$. Since $Y$ was assumed to be non-compact and $B_R$ is precompact, we know that $\varphi_\R y_0$ exits $B_R$ infinitely many times. Hence as $T \to \infty$ we have $m \to \infty$, implying
    	\[ \nu^R_T(K_\varepsilon) \geq 1-\varepsilon \]
    	for all large enough $T>0$, proving claim 1.
    	\medskip
    	
    	Asymptotic tightness implies that for any $R>0$ and sequence $T_n \to \infty$ there exists a subsequence for which the measures $\nu^R_{T_n} $ converge to a probability measure supported on $Y \cap B_R$.
    	\medskip
    	
    	\noindent \underline{\textbf{Claim 2:}} 
    	Any such limiting measure $\nu$ is invariant along flow lines in $B_R$.
        That is, if $ E \subseteq B_R $ is a Borel set, $s \in \R$, and $\varphi_s E \subseteq B_R$, then
        \[  \nu(E)=\nu(\varphi_s E). \]
    	    	
    	This follows from a standard amenability argument. It suffices to show that for any $f \in C_c(B_R)$ with $f \circ \varphi_s \in C_c(B_R)$ then $\nu(f \circ \varphi)=\nu(f)$. If $\mathrm{supp}(f)\cap Y = \emptyset$ the claim is trivially true. Otherwise, for any $T>0$ we have
    	\[ |\nu^R_T(f)-\nu^R_T(f\circ \varphi_s)|\leq \frac{2s \cdot \|f\|_\infty}{\int_{-T}^T \mathbbm{1}_{B_R}(\varphi_t y_0)dt}, \]
    	and since the denominator tends to infinity as $T \to \infty$ the claim follows.
    	\medskip
    	
    	We are ready to begin the construction in earnest. Let $T_n \to \infty$ be a sequence for which $\nu^1_{T_n} $ converges to a probability measure $\mu_1$ supported on $Y \cap B_1$. Let $T_{n_k}$ be a subsequence for which $\nu^2_{T_{n_k}} $ converges to a probability measure $\tilde{\mu}_2$ on $Y \cap B_2$. Taking a further subsequence we get a measure $\tilde{\mu}_3$ on $Y \cap B_3$ and so on ad infinitum. By a diagonal argument we obtain a sequence $T'_n$ under which 
    	\[ \nu^1_{T'_n} \to \mu_1 \quad \text{and} \quad \nu^\ell_{T'_n} \to \tilde{\mu}_\ell \quad \text{for all }2 \leq \ell \in \N. \]
    	
    	Since each $\tilde{\mu}_\ell$ is flow-line invariant inside $B_\ell$ and since finitely many $\varphi$-translates of $B_1$ cover $B_\ell$ we conclude that $\tilde{\mu}_\ell(B_1)>0$ for all $\ell$. Define
    	\[ \mu_\ell = \frac{1}{\tilde{\mu}_\ell (B_1)}\tilde{\mu}_\ell. \]
    	    	
    	\noindent \underline{\textbf{Claim 3:}} 
    	For any $1 \leq \ell_1<\ell_2$
    	\[ \mu_{\ell_2}|_{B_{\ell_1}}=\mu_{\ell_1}. \] 
    	\medskip
    	
    	Note that the inclusion of open sets $B_{\ell_1} \subseteq B_{\ell_2}$ gives a natural inclusion of $C_c(B_{\ell_1}) $ in $C_c(B_{\ell_2})$. By the definition of the statistical measures we thus have for any $T>0$ a constant $C(T)>0$ satisfying
    	\begin{equation}\label{eq:appdx restricted measures}
    		\nu^{\ell_2}_T(f)=C(T)\cdot \nu^{\ell_1}_T(f)
    	\end{equation}
    	for all $f \in C_c(B_{\ell_1})$. In fact
    	\[ C(T)= \frac{\int_{-T}^{T}\mathbbm{1}_{B_{\ell_1}}(\varphi_t y_0)dt}{\int_{-T}^{T}\mathbbm{1}_{B_{\ell_2}}(\varphi_t y_0)dt}.\]
    	Since the measures on both sides of \eqref{eq:appdx restricted measures} converge along $T'_n \to \infty$ we conclude that $C(T'_n)\to C$, where $C$ is independent of $f$, and 
    	\[ \tilde{\mu}_{\ell_2}(f)=C \cdot \tilde{\mu}_{\ell_1}(f) \qquad \text{for all } f \in C_c(B_{\ell_1}),\]
    	or
    	\[ \tilde{\mu}_{\ell_2}|_{B_{\ell_1}}=C \cdot \tilde{\mu}_{\ell_1}. \]
    	But after normalization both $\mu_{\ell_1}$ and $\mu_{\ell_2}$ give $B_1 $ (a subset of $ B_{\ell_1}$) mass 1, implying the claim.
    	\medskip    	
    	
    	We are ready to define the $\varphi$-invariant measure $\mu$ as follows:
    	\[ \mu(E):=\lim_{\ell \to \infty} \mu_\ell(E \cap Y) \]
    	for any Borel set $E \subseteq X$. Claim 3 implies that the above is an increasing limit, therefore ensuring this function is well-defined. Claim 3 further implies $\mu$ is indeed $\sigma$-additive and that
    	\begin{equation}\label{eq:appdx restriction final mu}
    		\mu|_{B_\ell} = \mu_\ell \quad \text{for all } \ell.
    	\end{equation}
    	
    	Crucially, the minimality of $Y$ together with assumptions (1)+(2) imply that $\left\{B_\ell\right\}_\ell$ is an open cover of $Y$. Hence, given any compact set $K \subset X$ there exists some $\ell$ large enough so that the compact set $K \cap Y $ is contained in $ B_\ell$. As $\mu(B_\ell)<\infty$, by construction, we thus conclude that $\mu$ is locally finite (and hence Radon, as $X$ is locally compact second countable and therefore also $\sigma$-compact).
    	
    	Additionally, given any compact set $K$ and any $s \in \R$, there exists $\ell$ so large such that $(K \cup \varphi_s K)\cap Y \subset B_\ell$. Hence by claim 2 and \eqref{eq:appdx restriction final mu}, and by the inner regularity of $\mu$, we conclude that $\mu$ is $\varphi$-invariant.
    	
    	Conservativity of $\mu$ follows from the fact that any locally finite dissipative measure is necessarily supported on properly embedded $\varphi$-orbits. Our assumption that $Y$ contains more than one orbit excludes this possibility.
    \end{proof}
    
    \subsection*{Existence of appropriate section}
    We now focus on the particular setting of this paper. Let $\Sigma_s$ be a loom surface satisfying the summability condition and let $Y=\overline{Nx_0}$ be $N$-minimal as discussed above.
    
    Let $U\leq \PSL_2(\R)$ denote the upper unipotent subgroup generating the expanding horocycle flow on $\T^1\Sigma_s$, we accordingly have
    \[ N=\left\{n_s = \begin{pmatrix}
    	1 & 0 \\ s & 1
    \end{pmatrix} : s \in \R \right\} \qquad,\qquad U=\left\{u_r = \begin{pmatrix}
    	1 & r \\ 0  & 1
    \end{pmatrix} : r \in \R \right\}. \]
    Given a subset $J \subseteq \R$, we denote $N_J=\{n_s : s \in J\}$, $U_J=\{u_r : r \in J\}$, and $A_J=\{a_t : t \in J\}$. 
    
    \begin{lemma}
    	There exists a section $\Psi \subseteq \T^1\Sigma_s$ for the $N$-flow satisfying:
    	\begin{enumerate}
    		\item for all $R>0$ the set $N_{(-R,R)}\Psi$ is open in $\T^1\Sigma_s$; and 
    		\item $Y\cap \overline{\Psi}=Y\cap \Psi \neq \emptyset$.
    	\end{enumerate}
    \end{lemma}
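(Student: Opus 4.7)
The plan is to take $\Psi$ as an open neighborhood of $x_0$ inside a small $AU$-transversal at $x_0$, large enough to engulf the full trace of $Y$ on the transversal up to its boundary. This forces condition (2) automatically, while the section property and condition (1) follow from the local $G = NAU$ product decomposition.

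Concretely, since the tangent directions to $A$ and $U$ together with $N$ span $T_e G$, there is an open rectangle $V_0\subseteq \R^2$ centered at the origin, small enough that the map
\[
\psi : V_0 \to \T^1\Sigma_s, \qquad \psi(s,u) = a_s\, u_u\, x_0
\]
is a topological embedding with $\overline{V_0}$ lying inside a slightly larger rectangle on which $\psi$ is still an embedding, and such that $(t,(s,u))\mapsto n_t a_s u_u x_0$ is a local diffeomorphism on a neighborhood of $\{0\}\times\overline{V_0}$. In particular $\psi(V_0)$ is a precompact $2$-dimensional submanifold of $\T^1\Sigma_s$ which is pointwise transverse to the $N$-foliation. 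The set $K := \psi^{-1}(Y)\cap\overline{V_0}$ is compact, as the intersection of the closed set $\psi^{-1}(Y)$ with $\overline{V_0}$, and contains the origin since $x_0\in Y$. By normality of $\R^2$, pick an open set $V$ with $K\subseteq V\subseteq \overline V\subseteq V_0$, and set $\Psi := \psi(V)$.

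To verify the properties: $\overline\Psi = \psi(\overline V)$ with $\overline V$ compact, giving precompactness. Pointwise transversality of $\psi(V_0)$ to the $N$-foliation, combined with the implicit function theorem and the fact that $\overline\Psi\subseteq \psi(V_0)$, shows that for every $z\in\T^1\Sigma_s$ the intersections of $Nz$ with $\Psi$ are isolated, so $\{t\in\R : n_t z\in\Psi\}$ is discrete. For condition (1), $N_{(-R,R)}\Psi$ is the image of the open set $(-R,R)\times V$ under the local diffeomorphism $(t,(s,u))\mapsto n_t a_s u_u x_0$ and is therefore open in $\T^1\Sigma_s$. For condition (2), since $\psi$ is a homeomorphism onto its image on $\overline{V_0}$,
\[
Y\cap \overline\Psi \;=\; \psi\bigl(\psi^{-1}(Y)\cap \overline V\bigr) \;\subseteq\; \psi(K) \;\subseteq\; \psi(V) \;=\; \Psi,
\]
so $Y\cap \overline\Psi = Y\cap\Psi$, while $x_0 = \psi(0,0)\in Y\cap\Psi$ gives non-emptiness.

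The only subtle point is arranging the initial rectangle $V_0$ so that $\psi$ is a topological embedding and the $NAU$-chart is a local diffeomorphism in a uniform neighborhood of $\overline{V_0}$; this is standard and amounts to choosing $V_0$ within the injectivity radius of a fundamental chart for $G/\Gamma$ at a lift of $x_0$. Once this setup is in place, the key conceptual move is trading the difficult problem of choosing the boundary of a transversal to avoid $Y$ for the trivially solvable problem of separating a compact set $K$ from the boundary of an open set in $\R^2$.
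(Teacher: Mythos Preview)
Your normality step hides the very problem you claim to have traded away. To invoke normality and produce an open $V$ with $K\subseteq V\subseteq\overline V\subseteq V_0$, you need $K$ to sit inside the \emph{open} set $V_0$. But you defined $K=\psi^{-1}(Y)\cap\overline{V_0}$, and nothing prevents $K$ from meeting $\partial V_0$. The condition $K\cap\partial V_0=\emptyset$ is exactly $Y\cap\psi(\partial V_0)=\emptyset$, i.e.\ the boundary of your initial transversal already avoids $Y$. So the ``trivially solvable problem of separating a compact set $K$ from the boundary of an open set'' is only trivially solvable once you have solved the original problem of choosing a transversal whose boundary misses $Y$. No generic smallness of $V_0$ helps: the trace of $Y$ on the $AU$-leaf through $x_0$ can, a priori, reach the edge of any rectangle you pick.

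The paper's proof confronts this directly, using two features of the specific minimal set $Y=\overline{Nx_0}$ established earlier. In the $A$-direction it uses that $Y\subseteq\beta^{-1}(0)$ (see \Cref{rmk:minimal set on beta level set}), so $a_{\pm\delta/4}x_0\notin Y$, and hence small neighborhoods $Q_\pm$ of these points miss $Y$; this handles the top and bottom edges of the rectangle. In the $U$-direction it exploits that the limit set is not all of $\partial\Hplane$: one can choose $c,d$ close to $0$ so that the geodesic rays $A_+u_cx_0$ and $A_+u_dx_0$ terminate outside the limit set, forcing infinite slack and hence $Au_cx_0,\,Au_dx_0$ disjoint from $Y$; this handles the two vertical edges. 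Without some input of this kind about $Y$, your argument does not close.
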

    
    This lemma together with \Cref{prop: appendix measure construction} concludes the proof of the existence of an $N$-invariant locally finite conservative measure, as desired.
    
    \begin{proof}
    	Let $\delta>0$ denote the injectivity radius at $x_0$. As follows from our construction of $Y=\overline{Nx_0}$ (see \Cref{rmk:minimal set on beta level set}) we know that $a_{\delta/4}x_0$ and $a_{-\delta/4}x_0$ are not contained in $Y$, hence there exist small neighborhoods $Q_\pm$ around $a_{\pm\delta/4}x_0$, respectively, which are disjoint from $Y$. Let $0<\eta<\delta/4$ be sufficiently small so that
    	\[ a_{\pm \delta/4}U_{(-\eta,\eta)}x_0 \subset Q_\pm. \]
    	
    	Choose any open interval $(c,d) \subseteq (-\eta/2,\eta/2)$ where both $A_+u_cx_0$ and $A_+u_dx_0$ have infinite slack, e.g.~by choosing these rays to have lifts in $\T^1\Hplane$ terminating outside the limit set. We define
    	\[ \Psi:=A_{(-\delta/4,\delta/4)}U_{(c,d)}x_0. \]
    	Recall that $NAU$ corresponds to the open Bruhat cell in $\PSL_2(\R)$, which means in particular that the multiplication map $ N \times A \times U \to NAU $ is a diffeomorphism (see e.g.~\cite[Lemma 6.44]{knappLieGroupsIntroduction2002} and \S2.3 in \cite{farreMinimizingLaminationsRegular2023a} for more details).
    	By our choice of constants and the fact that the parameterizations of $U$, $N$ and $A$ are of unit speed, we conclude that $N_{(-\delta/4,\delta/4)}\Psi \subset B_\delta^{\T^1\Sigma_s}(x_0)$ hence showing the multiplication map $N_{(-\delta/4,\delta/4)} \times A_{(-\delta/4,\delta/4)}\times U_{(c,d)} \to N_{(-\delta/4,\delta/4)}\Psi$ is a homeomorphism. 
    	
    	Injectivity of the map above implies, in particular, that $\Psi$ is a section for the $N$-flow.
    	Moreover, the fact that $N_{(-R,R)}\Psi$ is open in $\T^1\Sigma_s$ for all $R<\delta/4$ implies property (1) of the statement. 
    	
    	Now note that
    	\[ \partial \Psi = A_{[-\delta/4,\delta/4]}u_cx_0 \cup A_{[-\delta/4,\delta/4]}u_dx_0 \cup a_{-\delta/4}U_{[c,d]}x_0 \cup a_{\delta/4}U_{[c,d]}x_0. \]
    	By our choice of $c$ and $d$ we know that $Y \cap (Au_cx_0 \cup Au_dx_0)=\emptyset$. In addition,  $Y \cap a_{\pm\delta/4}U_{[c,d]}x_0\subset Y \cap (Q_-\cup Q_+) = \emptyset$. 
        We have thus concluded property (2), and the proof of the lemma.
    \end{proof}
    
	\subsection{Acknowledgments} OL would like to thank FD for the warm hospitality at Rennes University during January 2024. OL would also like to thank Zemer Kosloff, Fran\c{c}ois Ledrappier, George Peterzil, Ariel Rapaport, and Omri Sarig for various helpful discussions.
    
    JF acknowledges the support of the Institut Henri Poincaré (UAR 839 CNRS-Sorbonne Université) and LabEx CARMIN (ANR-10-LABX-59-01) and  DFG – Project-ID 281071066 – TRR 191. OL acknowledges the support of ISF grant 957/25. YM acknowledges the support of NSF grant DMS-2005328.
	
\bibliography{Refrences}{}
\bibliographystyle{amsalpha.bst}

\end{document}